\documentclass[11pt]{article}
\usepackage{a4}
\usepackage{amsfonts}
\usepackage{amssymb}
\usepackage{amsmath}
\usepackage{amsthm}
\usepackage{latexsym}
\usepackage{layout}
\usepackage[english]{babel}
\usepackage{epsfig}
\addtolength{\textwidth}{4cm}
\addtolength{\oddsidemargin}{-2cm}
\addtolength{\evensidemargin}{-2cm}
\addtolength{\topmargin}{-.75cm}

\numberwithin{equation}{section}

\newtheorem{assumptions}{Assumption}[section]
\newtheorem{theorem}[assumptions]{Theorem}

\newtheorem{definition}[assumptions]{Definition}
\newtheorem{proposition}[assumptions]{Proposition}
\newtheorem{corollary}[assumptions]{Corollary}
\newtheorem{lemma}[assumptions]{Lemma}
\newtheorem{remark}[assumptions]{Remark}

\def\1{\raisebox{2pt}{\rm{$\chi$}}}

\newcommand{\eps}       {\epsilon}

\newcommand{\nada}[1]   {}

\newcommand{\R}         {\ensuremath{\mathbb R}}

\def\R{I\!\!R}

\def\1{\raisebox{2pt}{\rm{$\chi$}}}
\def\div{{\rm div}}

\begin{document}

\title{Parabolic equations in time dependent domains}

\author{Juan Calvo\footnote{Departamento de Matem\'atica Aplicada, Universidad de Granada, Granada, Spain,
        e-mail: juancalvo@ugr.es}  
\and Matteo Novaga\footnote{
Department of Mathematics, University of Pisa,
Pisa, Italy, e-mail: matteo.novaga@unipi.it} 
\and Giandomenico Orlandi\footnote{
Department of Computer Science, University of Verona,
Verona, Italy, e-mail: giandomenico.orlandi@univr.it}}

\date{\it Dedicated to the memory of Vicent Caselles}

\maketitle

\begin{abstract}
We show existence and uniqueness results for nonlinear parabolic equations
in noncylindrical domains with possible jumps in the time variable.
\end{abstract}

\noindent {\bf Keywords:} Nonlinear parabolic equations, Noncylindrical domains, Monotone operators.

\noindent {\bf AMS subject classification:} 35K55, 35K20, 35K65, 47H05.

\tableofcontents

\section{Introduction}\label{secintro}

In recent years there has been a renewed interest in problems related with 
partial differential equations formulated in domains that change over time. 
This is partly due to the fact that a number of problems in mathematical 
biology are naturally posed on growing domains (e.g. developing organisms 
or proliferating cells, see for instance \cite{Chaplain, Kondo,  Harrison}) or domains that evolve in some 
particular way. Such issues have originated a wide amount of mathematical research, let us mention \cite{Padilla,Barreira,Maini, Maini2}. To this we should add more classical engineering 
applications like fluids or gases in settings as 
channels or pipes with confining walls that may be displaced, 
removed or brought in at will. A sample of different applications of partial differential equations in evolving domains can be found in the recent survey paper \cite{Knobloch}. In fact, it is not hard 
to conjecture that new applications will involve 
equations in moving domains in the future. Apart from that, 
partial differential equations posed on non-cylindrical domains 
are interesting also from the purely mathematical point of view. 

This has led to an outburst of works on this subject in the 
literature that added up to some classical works  
\cite{Baiocchi,Lieberman86, Lieberman97,Yamada,Lions57, Cannarsa} 
to the extent that the number of 
current references is overwhelming.  
 Let us comment on this literature according to the approach, 
the assumptions on the evolution of the domain where the equation 
is posed and the types of equations considered. Many authors used semigroup methods to tackle
these problems (see for instance \cite{Acquistapace, Lumer}
and references therein), 
but other approaches include adding a time viscosity \cite{Bonaccorsi}, 
mapping the spacetime domain to a cylindrical domain \cite{Attouch} 
or use De Giorgi's minimizing movements \cite{Gianazza, Bernardi}. 
As regards time variations of the domain, it is customary 
to impose some sort of continuity (for instance Lipschitz 
continuity \cite{Savare97}, relaxed to H\"older continuity 
in \cite{Bonaccorsi} and to absolute continuity in \cite{Paronetto}), 
alternatively a monotonicity condition can be used 
(i.e. expanding domains \cite{Gianazza, Bernardi}) 
or Reinfenberg-type domains can be considered \cite{Byun}. 
Concerning the type of equations, most of the works focus 
on parabolic equations which are assumed to be linear 
or in divergence form (see however 
\cite{Bertsch, Lumer, Bonaccorsi, Paronetto} 
where also other operators are admitted).

\smallskip

In this paper we are interested in well-posedness of 
parabolic equations in divergence form, in bounded domains 
that evolve in time. More precisely, we deal with the 
Cauchy--Dirichlet problem, in a formulation that allows 
boundary conditions to depend on time. 

Let us discuss what 
are the novelties of this work with respect to the 
already existing literature. First, we introduce a
simple approach to construct solutions, which consists 
in performing a time slicing of the domain, and then solve 
a family of approximating equations in cylindrical domains. 
The simplicity of this approach may allow to use it as a
starting point for devising numerical methods for this sort of problems. 
Despite its simplicity, we are not aware of other works where 
such a slicing strategy is used. Our approach allows to deal with nonlinear equations, which include the parabolic $p$-Laplacian as a particular case. 
Also, our slicing technique applies to quite  
general variations on the domain over time: we only require 
them to be of bounded variation, allowing for sudden 
{\it jumps} (expansions or contractions) of the domain. 
In particular, we do not impose any constraint 
on the topology of the evolving domains, which may differ from that 
of the initial domain. We are also able to prove uniqueness 
under some additional constraints on the domain (see Section \ref{so_unique}). 

To our best knowledge, this generality 
has not been previously achieved in the literature, except for the 
case of purely expanding domains \cite{Bernardi}. However, in \cite{Paronetto}
F. Paronetto proposes a different approach, 
which can be extended to cover quite general operators 
and boundary conditions.

\paragraph{Possible extensions.}
Since our main goal is presenting a method to tackle parabolic equations in moving domains, we did not focus on looking for the most general possible result. For instance, 
for the sake of simplicity
we chose to deal only with bounded initial data. We stress that our main idea is to use a time slicing to approximate the original problem by a sequence of problems defined on cylindrical domains. 
As we do not focus on any particular equation, we chose to use abstract Lions' theory to provide existence for the approximating problems. 
However, we could also use other theories as starting point 
to provide existence of approximate solutions. 
If we are interested in a particular equation (the p-Laplace equation, say) 
then we will likely be using specific existence results to set up our method, 
and those will provide a much more accurate framework for the admisible set of initial conditions.

In that line of thought, the fact that our present formulation does not allow to deal with degenerate equations, such as the porous media equation and its variants, could appear as a drawback. Again, we argue that suitable modifications of the method here proposed would allow to tackle these problems. In fact, even sticking to Lions' theory, porous media equation and related ones can be treated by making use of the compactness results by Dubinskii \cite{Dubinskii}, carefully adapting our arguments in order to cope with that (see \cite[Chapter I, 12]{LionsBook}). 
We did not pursue this line here in order to keep the presentation as simple as possible.

We also point out that we cannot deal with operators with linear growth 
such as the total variation flow or the parabolic minimal surface equation
(see \cite{Bertsch} for some results in this direction 
in the one-dimensional case). This is another challenging line to explore. Finally, following the same approach it should be possible 
to consider similar evolution equations on manifolds evolving 
in time and/or nonlocal operators (see \cite{Alphonse,Alphonse2} and references therein).

\smallskip

\paragraph{Acknowledgements.} 
We wish to thank the anonimous referee for useful comments and suggestions which helped us to improve the paper.

J.C. was supported in part by ``Plan Propio de Investigaci\'on, programa 9'' (funded by Universidad de Granada and FEDER funds) and Project MTM2014-53406-R (funded by the Spanish MEC and european FEDER funds); he also acknowledges partial support during the development of this work from Obra Social La Caixa through the ``Collaborative Mathematical Research'' programme and a Juan de la Cierva fellowship of the Spanish MEC. 
M.N. was supported by the Italian GNAMPA, and by the University of Pisa via grant PRA-2015-0017. 
M.N. and G.O. were also supported by the Project Italy-Spain MIUR-MICINN IT0923M797.

\smallskip

This problem was proposed to us by our friend and collegue Vicent Caselles. Without his contribution and suggestions
this work would not have been possible, and we dedicate it to his memory.

\section{Standing assumptions and main results}

Our purpose is to prove existence and uniqueness results for nonlinear parabolic equations with time dependent coefficients
in time dependent domains. More precisely, 
given an open set $\widetilde\Omega\subset [0,T]\times\R^d$
we shall consider the following problem:
\begin{equation}\label{model1}
\left\{
\begin{array}{lr}
\displaystyle
u_t(t,x) = \mathrm{div} \left( A(t,x,u,\nabla u)\right)  \qquad & \hbox{in $\widetilde{\Omega}$,}
\\ \\
\displaystyle u(0,x) = u_0(x) \qquad &\hbox{in $\Omega(0)$,}
\\ \\
\displaystyle u(t,x) = \psi(t,x) \qquad & 
\hbox{in $\partial_{l} \widetilde{\Omega}\cup \partial_{-1} \widetilde{\Omega}$}, 
\end{array}
\right.
\end{equation}
where we let $\nu^{\widetilde{\Omega}}=(\nu_t,\nu_x)$ be the outer unit normal to $\partial\widetilde{\Omega}$, $\Omega(0)$ is the initial domain  
defined in Assumption \ref{ass2}, and we set
\begin{align*}
\partial_{\pm 1} \widetilde{\Omega} &:= \{(t,x)\in \partial\widetilde{\Omega}: 
t>0, \nu_t = \pm 1\},
\\
\partial_{l} \widetilde{\Omega} &:= \{(t,x)\in \partial\widetilde{\Omega}: 
|\nu_t | < 1\}
=\{(t,x)\in \partial\widetilde{\Omega}: |\nu_x| > 0\}.
\end{align*}

In order to establish existence and uniqueness of solutions,
we shall make suitable assumptions on the flux vector field  $A$, 
on the data $u_0$, $\psi$ and on the domain $\widetilde{\Omega}$.

\begin{assumptions}
\label{ass1}
The set $\widetilde{\Omega}\subset (0,T)\times \R^d$ is a bounded open set with Lipschitz boundary, and 
we let
$$
\Omega(t) := \{x\in\R^d:\,(t,x)\in \widetilde\Omega\}\qquad t\in (0,T).
$$
Note that $\Omega(t)$ is an open set, possibly empty, for all $t\in (0,T)$.  
\end{assumptions}

Notice that $\Omega(t)$ has Lipschitz boundary 
for a.e. $t\in (0,T)$, and there exist the limits
\begin{equation}
\label{cerodef}
\Omega(t\pm) := \lim_{s\to t\pm}\Omega(s) \qquad \text{for all $t\in [0,T]$,} 
\end{equation}
where the limit is taken in the Hausdorff topology.

\begin{assumptions}
\label{ass2}
The set $\Omega(0):=\Omega(0+)$ is open and has Lipschitz boundary. 
\end{assumptions}

Next we describe our assumptions on the operator $A$.
Let $Q_0$ be an open set of $\R^d$ such that $\cup_{t\in [0,T]} \Omega(t)\subset\subset Q_0$ --where by $\subset\subset$ we mean that the inclusion is compact-- and let $Q_T := (0,T) \times Q_0$. We shall denote by $\mathcal{M}(Q_T)$ the space of all Radon measures on $Q_T$.

\begin{assumptions}
\label{ass3}
The function $A:Q_T \times\R \times\R^d\to \R^d$ is a Carath\'eodory map satisfying
\begin{equation}\label{L1}
\vert A(t,x,z,\xi)\vert \leq c \vert\xi\vert^{p-1} + b(t,x), \quad \hbox{\rm $c > 0$,
$b\in L^{p'}(Q_T)$, $1 < p < \infty$, $\frac{1}{p}+\frac{1}{p'}=1$,}
\end{equation}
\begin{equation}\label{L2}
 A(t,x,z,\xi)\cdot \xi \geq \alpha \vert\xi\vert^{p} - d(t,x), \quad \hbox{\rm $\alpha > 0$, $d\in L^{1}(Q_T)$,}
\end{equation}
\begin{equation}\label{L3}
 (A(t,x,z,\xi)-A(t,x,z,\xi^*))\cdot (\xi-\xi^*) \geq 0, 
\end{equation}
for a.e. $(t,x)\in Q_T$ , and for all  $z\in\R$, $\xi,\xi^*\in\R^d$.
Moreover, we assume that 
\begin{equation}\label{L4}
\vert A(t,x,z,\xi)-A(s,y,w,\xi)\vert \leq 
(\omega(|t-s|+\vert x-y\vert) + C|z-w|) |\xi|^{p-1} ,
\end{equation}
where $\omega$ is a modulus of continuity and $C\ge 0$.
We assume also that
\begin{equation}\label{L5}
A(t,x,z,0)=0\quad \forall z \in \R,\ a.e. \ (t,x)\in Q_T.
\end{equation}
\end{assumptions}
\noindent Note that \eqref{L3} and \eqref{L5} imply that
\begin{equation}
\label{strong_monotonicity}
 A(t,x,z,\xi)\cdot \xi  \geq 0, \quad \hbox{\rm a.e. in $Q_T$, and for all
 $z\in\R$, $\xi \in \R^d$.}
\end{equation}

We will consider the problem \eqref{model1} with initial and boundary conditions
\begin{equation}\label{BD1}
u(0,x) = u_0(x)  \in L^\infty(\Omega(0)),
\end{equation}
\begin{equation}\label{BD2}
u(t,x) =\psi(t,x), \qquad \hbox{\rm $(t,x)\in \partial_{l} \widetilde{\Omega}\cup \partial_{-1} \widetilde{\Omega}$, $t> 0$.}
\end{equation}

\begin{assumptions}
\label{ass4}
We assume that 
\begin{equation}\label{psi1case1}
\psi \in C({Q_T}) \cap L^p(0,T;W_0^{1,p}(Q_0)),
\end{equation}
and
\begin{equation}\label{psi2case1}
\psi_t \in L^1({Q_T}) \cap L^{p'}(0,T;W^{-1,p'}(Q_0)). 
\end{equation}
\end{assumptions}
Let us now define the space after which we model the solutions of our problem.
 
\begin{definition}\label{here}
Let $\mathcal{V}$ be the closure of $C_c^1(\widetilde\Omega)$ with respect to the norm
$$
||v||_{\mathcal{V}}:= \left(\int_{\widetilde\Omega} |\nabla v|^p\, dxdt\right)^{1/p}, \quad v \in C_c^1(\widetilde\Omega).
$$ 
\noindent Notice that functions in $\mathcal{V}$ do not necessarily
have zero trace on $\partial_{\pm 1}\widetilde\Omega$ or on $\Omega(0)$.
\end{definition}

Our concept of solution will be the following:
\begin{definition}
\label{solucion_distribucional} 
We say that a function $u\in L^1(\widetilde\Omega)$ is a weak solution of (\ref{model1}) if the following statements hold:
\begin{enumerate}
\item 
$u-\psi \in \mathcal{V}$ and $A(t,x,u,\nabla u)\in L^{p'}(\widetilde\Omega)$. 

\item $u_t \in \mathcal{V}^*$ 
(note that this implies that $u$ has a trace on $\partial_{\pm 1} \widetilde\Omega$ and on $\Omega(0)$).

\item {$u(0) =u_0$ a.e. on $\Omega(0)$ and $u=\psi$ a.e. on every relatively open subset of $\partial_{-1} \widetilde\Omega$.}

\label{solut3}
\item The following integral formulation
\begin{equation}\label{weakId1}
- \int_0^T \int_{\Omega(t)} u\phi_t \, dxdt -  \int_{\Omega(0)} u_0 \phi(0)\, dx + \int_0^T \int_{\Omega(t)} A(t,x,u,\nabla u)\cdot\nabla \phi \, dxdt = 0
\end{equation}
holds for all $\phi\in\mathcal{D}([0,T)\times Q_0)$ with $\mathrm{supp}\, \phi\subset\subset \widetilde\Omega$.
\end{enumerate}

\end{definition}

Let us state the main existence result of this paper.
\begin{theorem}
\label{main_result}
Let Assumptions \ref{ass1}--\ref{ass4} be satisfied. 
Then there exists a weak solution of \eqref{model1} in the sense of Definition \ref{solucion_distribucional}.
\end{theorem}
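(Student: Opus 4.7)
The plan follows the strategy outlined in the introduction: I replace $\widetilde\Omega$ by a sequence of piecewise-cylindrical inner approximations, solve a family of classical parabolic problems on the slabs by Lions' monotone-operator theory, and pass to the limit.

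First, I choose a sequence of partitions $0=t_0^n<t_1^n<\cdots<t_{N_n}^n=T$ whose mesh tends to zero, arranged so that the (at most countable) jump locus of the BV-in-time map $t\mapsto\Omega(t)$ eventually lies in the partition. On each open slab $I_i^n=(t_i^n,t_{i+1}^n)$ I fix a Lipschitz open set $\Omega_i^n\subset\bigcap_{t\in I_i^n}\Omega(t)$ such that $\widetilde\Omega^n:=\bigcup_i I_i^n\times\Omega_i^n$ exhausts $\widetilde\Omega$ as $n\to\infty$. On each cylinder $I_i^n\times\Omega_i^n$, Assumption \ref{ass3} places us within the scope of the classical Lions theory of coercive pseudomonotone operators, which produces a weak solution $u_i^n$ with boundary datum $\psi$ on $\partial\Omega_i^n$ and initial datum $v_i^n$. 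Here $v_0^n=u_0$, and $v_i^n$ is obtained on $\Omega_i^n$ by restricting $u_{i-1}^n(t_i^n,\cdot)$ to $\Omega_i^n\cap\Omega_{i-1}^n$ and extending by $\psi(t_i^n,\cdot)$ elsewhere. Concatenating yields $u^n$ on $\widetilde\Omega^n$, which I extend by $\psi$ to $Q_T$.

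Next I derive uniform a priori estimates. Testing the equation on each slab with $u^n-\psi$, the coercivity \eqref{L2}, the growth \eqref{L1} and the regularity \eqref{psi1case1}--\eqref{psi2case1} of $\psi$, together with a telescoping sum across slabs, yield $\|\nabla u^n\|_{L^p(\widetilde\Omega^n)}+\|u^n\|_{L^\infty(0,T;L^2)}+\|A(\cdot,u^n,\nabla u^n)\|_{L^{p'}}\le C$. A maximum-principle argument combined with $u_0\in L^\infty$ and $\psi\in C(\overline{Q_T})$ also gives $\|u^n\|_\infty\le C$. By duality from the equation, $u^n_t$ is bounded in the dual of a suitable subspace of $\mathcal{V}$. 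Along a subsequence, $u^n\rightharpoonup u$ weakly in $\mathcal{V}$, $A(\cdot,u^n,\nabla u^n)\rightharpoonup\chi$ weakly in $L^{p'}$, and an Aubin--Lions argument applied on compact subsets $K\Subset\widetilde\Omega$ (which are eventually contained in $\widetilde\Omega^n$) gives $u^n\to u$ strongly in $L^p_{\rm loc}(\widetilde\Omega)$. The weak formulation \eqref{weakId1} then passes to the limit with $\chi$ in place of $A(\cdot,u,\nabla u)$. To identify $\chi$, I invoke Minty's trick: combining the monotonicity \eqref{L3}, the continuity \eqref{L4}, the strong $L^p_{\rm loc}$ convergence of $u^n$ and the weak convergence of $\nabla u^n$, one derives $\int_{\widetilde\Omega}(\chi-A(\cdot,u,w))\cdot(\nabla u-w)\,dxdt\ge 0$ for every admissible $w$, from which $\chi=A(\cdot,u,\nabla u)$. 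The initial trace $u(0)=u_0$ on $\Omega(0)$ and the boundary condition $u=\psi$ on $\partial_{-1}\widetilde\Omega$ follow from the slab construction and the weak continuity of $u$ in time given by $u_t\in\mathcal{V}^*$.

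The main obstacle is the control of the geometry at the jump points of $t\mapsto\Omega(t)$: the inner approximations $\Omega_i^n$ must be chosen so that $\widetilde\Omega^n$ exhausts $\widetilde\Omega$ while simultaneously allowing the boundary data on $\partial\Omega_i^n$ to be uniformly controlled and the telescoping of the energy estimates across slab interfaces to remain valid, without spurious contributions from $\partial_{\pm1}\widetilde\Omega$. The Minty identification also requires care, because test functions must be taken in $\mathcal{V}$ and the moving-domain geometry does not admit straightforward extension by zero; the density of $C_c^1(\widetilde\Omega)$ in $\mathcal{V}$, which is built into Definition \ref{here}, is what ultimately makes this step go through.
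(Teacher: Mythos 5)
Your overall strategy is the one the paper uses: slice $[0,T]$, solve a cylindrical Leray--Lions problem on each slab with the previous trace (extended by $\psi$ on the newly appearing region) as initial datum, extend by $\psi$, derive the energy and $L^\infty$ bounds by telescoping, get strong $L^1_{\rm loc}$ compactness on cylinders compactly contained in $\widetilde\Omega$ via Aubin--Lions--Simon, and identify the flux by Minty--Browder. The only structural difference is that you take inner approximations $\Omega_i^n\subset\bigcap_{t\in I_i^n}\Omega(t)$, whereas the paper simply uses the left-endpoint sections $\Omega(t_k^n)$ (so $\Omega^\Delta$ is neither inside nor outside $\widetilde\Omega$ and one only needs its Hausdorff convergence to $\widetilde\Omega$); this is a cosmetic variant, and your exhaustion-from-inside even simplifies the ``eventually contained'' bookkeeping for compact subsets.

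There is, however, one genuine gap: the Minty identification as you state it does not close. For a monotone operator depending on the gradient, monotonicity \eqref{L3}, continuity \eqref{L4}, strong $L^p_{\rm loc}$ convergence of $u^n$ and weak convergence of $\nabla u^n$ are \emph{not} enough to conclude $\int(\chi-A(\cdot,u,w))\cdot(\nabla u-w)\ge 0$; one must also prove the energy upper semicontinuity
\begin{equation*}
\limsup_{n\to\infty}\int A(\cdot,u^n,\nabla u^n)\cdot\nabla u^n\,\phi\,dx\,dt\;\le\;\int \chi\cdot\nabla u\,\phi\,dx\,dt ,
\end{equation*}
and this must be extracted from the equation itself. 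In the moving-domain setting this is precisely the delicate point: you cannot test the limit equation with $(u-\psi)\phi$ and integrate the pairing $\langle u_t,u\rangle$ by parts, because the integration-by-parts formula on $\widetilde\Omega$ is exactly what is \emph{not} available without the additional Assumption \ref{for_uniqueness} (the paper only proves it, following Paronetto, for the uniqueness part). The paper's workaround (Lemma \ref{ext1}) is to localize with $\phi$ compactly supported in $\Omega^\Delta\cap\widetilde\Omega$, introduce the Steklov average $u^\tau(t)=\tau^{-1}\int_{t-\tau}^t u(s)\,ds$ of the limit, test the \emph{approximate} equation with $(u^\Delta-u^\tau)\phi$, and pass first $N\to\infty$ and then $\tau\to 0$, using the local $L^{p'}(W^{-1,p'})$ bound on $u^\Delta_t$ to control the commutator term $\rho^\tau$. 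Your sketch should make explicit that the $\limsup$ inequality is the crux and supply an argument of this type; as written, the sentence ``one derives $\int(\chi-A(\cdot,u,w))\cdot(\nabla u-w)\ge 0$'' hides the only step of the limit passage that is not routine.
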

Following \cite[Assumption H.2]{Paronetto},
we introduce an additional assumption on the domain which we will need in the 
uniqueness proof.

\begin{assumptions}\label{for_uniqueness}
For every $t_0\in [0,T]$, there exist an open neighborhood $U$ of $t_0$ 
and a family of maps $G(\cdot,t):\Omega(t_0)\to \Omega(t)$, with $t\in U\cap [0,T]$,
such that
\begin{itemize}
\item[--] $G(\cdot,t)$ is a bijection for almost every $t\in U\cap [0,T]$;
\item[--] $G(\cdot,t)$ is Lipschitz continuous with its inverse for every $t\in U\cap [0,T]$;
\item[--] $G(x,\cdot)$ and $|\nabla G(x,\cdot)|$ are absolutely continuous for almost every
$x\in\Omega(t_0)$;
\item[--] $|\nabla G(\cdot,t)|\in L^1(\Omega(t_0))$ for every $t\in U\cap [0,T]$
and $\partial_t |\nabla G|\in L^1(\Omega(t_0)\times (0,T))$.
\end{itemize}
\end{assumptions}
Note that this assumption does not allow "jumps" of the sections $\Omega(t)$.
However, we could work in a more general framework in which the conditions in Assumption \ref{for_uniqueness} break down for a finite set of times; we comment on this in Remark \ref{finite_sing} below.

Let us state now our uniqueness result.
\begin{theorem}\label{teounique}
Let Assumptions \ref{ass1}--\ref{ass4} and \ref{for_uniqueness} be satisfied. Then the solution of \eqref{model1} is unique in the class of weak solutions.
\end{theorem}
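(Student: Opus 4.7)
The plan is to let $u_1,u_2$ be two weak solutions of \eqref{model1} with the same data, set $w:=u_1-u_2$, and derive an energy identity for $w$ that together with Gronwall forces $w\equiv 0$. Observe first that $w\in\mathcal{V}$ (since both $u_i-\psi\in\mathcal{V}$), $\partial_t w\in\mathcal{V}^*$, $w(0)=0$ a.e.\ on $\Omega(0)$, and $w$ vanishes on $\partial_l\widetilde\Omega\cup\partial_{-1}\widetilde\Omega$. Subtracting the two weak formulations yields, distributionally,
\begin{equation*}
\partial_t w=\mathrm{div}\bigl(A(t,x,u_1,\nabla u_1)-A(t,x,u_2,\nabla u_2)\bigr)\qquad \text{in }\widetilde\Omega.
\end{equation*}

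The first step is to make rigorous the identity $\langle\partial_t w,w\rangle=\tfrac12\tfrac{d}{dt}\|w\|_{L^2(\Omega(t))}^2$ on a time-varying domain. This is the main technical obstacle, and is exactly where Assumption~\ref{for_uniqueness} is used. I would cover $[0,T]$ by finitely many neighborhoods $U_j$ provided by that assumption, and on each $U_j$ pull back to the fixed reference domain $\Omega(t_0^j)$ via $y=G_j^{-1}(x,t)$. The transformed function $\tilde w(t,y):=w(t,G_j(y,t))$ belongs to $L^p(U_j;W^{1,p}_0(\Omega(t_0^j)))$ with time derivative in the dual, so the standard Lions integration-by-parts formula applies on the cylindrical domain $U_j\times\Omega(t_0^j)$. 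Transporting back, one gets
\begin{equation*}
\tfrac12\|w(s_2)\|_{L^2(\Omega(s_2))}^2-\tfrac12\|w(s_1)\|_{L^2(\Omega(s_1))}^2
=\int_{s_1}^{s_2}\langle\partial_t w,w\rangle\,dt+R(s_1,s_2),
\end{equation*}
where $R$ collects Jacobian corrections $\partial_t|\nabla G_j|$ and is absolutely continuous by the last bullet of Assumption~\ref{for_uniqueness}. No lateral boundary term appears because $w$ vanishes on $\partial_l\widetilde\Omega$.

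Using $w$ as a test function in the difference equation (justified by the previous step and a density argument using $C^1_c(\widetilde\Omega)$) gives
\begin{equation*}
\tfrac12\|w(t)\|_{L^2(\Omega(t))}^2+\int_0^t\!\!\int_{\Omega(s)}\bigl(A(s,x,u_1,\nabla u_1)-A(s,x,u_2,\nabla u_2)\bigr)\cdot\nabla w\,dx\,ds=\widetilde R(t).
\end{equation*}
I would split the integrand as $\bigl(A(s,x,u_1,\nabla u_1)-A(s,x,u_1,\nabla u_2)\bigr)\cdot\nabla w+\bigl(A(s,x,u_1,\nabla u_2)-A(s,x,u_2,\nabla u_2)\bigr)\cdot\nabla w$. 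The first term is nonnegative by \eqref{L3}. For the second, \eqref{L4} yields the pointwise bound $C|w|\,|\nabla u_2|^{p-1}|\nabla w|$. An $L^\infty$ bound on $u_1,u_2$ (inherited from the $L^\infty$ data $u_0,\psi$, transferred from the approximating cylindrical problems by maximum principle arguments) controls $|w|$ uniformly, and a Young-type inequality combined with the $L^p$ integrability of $\nabla u_2$ and $\nabla w$ produces a bound of the form $f(s)\|w(s)\|_{L^2(\Omega(s))}^2+h(s)$ with $f,h\in L^1(0,T)$ and $h$ arising only from $\widetilde R$.

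Since $w(0)=0$, Gronwall's inequality then forces $\|w(t)\|_{L^2(\Omega(t))}\equiv 0$, hence $u_1=u_2$ a.e.\ on $\widetilde\Omega$. Besides the integration-by-parts formula on the moving domain (which is the main obstacle and leans essentially on Assumption~\ref{for_uniqueness}), the delicate step is handling the $u$-dependence in $A$: because \eqref{L3} only gives monotonicity, not strong monotonicity, the term $\int|w||\nabla u_2|^{p-1}|\nabla w|$ cannot be absorbed in a coercive quantity, and one must rely on the boundedness of the solutions to reduce it to a Gronwall-compatible expression.
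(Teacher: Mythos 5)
Your overall architecture (subtract the two equations, justify an integration-by-parts formula on the moving domain via Assumption~\ref{for_uniqueness}, use monotonicity \eqref{L3} for the gradient part and \eqref{L4} for the $u$-dependence of $A$) matches the paper's strategy, and your sketch of how the pullback $G(\cdot,t)$ yields the product rule is in the spirit of the paper's Proposition on \eqref{ibp}. However, the final step of your argument has a genuine gap: the term
\begin{equation*}
C\int_{\Omega(s)}|w|\,|\nabla u_2|^{p-1}|\nabla w|\,dx
\end{equation*}
cannot be reduced to a Gronwall-compatible bound $f(s)\|w(s)\|_{L^2(\Omega(s))}^2+h(s)$ with $h$ independent of $w$ and harmless. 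Any Young splitting that isolates $\|w\|_{L^2}^2$ necessarily produces a companion term of order $|\nabla w|^p$, and since \eqref{L3} is mere monotonicity (the good term $(A(s,x,u_1,\nabla u_1)-A(s,x,u_1,\nabla u_2))\cdot\nabla w$ is only $\ge 0$, not $\ge\alpha|\nabla w|^p$), there is no coercive quantity on the left to absorb it. Falling back on the $L^\infty$ bound, as you propose, gives only $2MC\int|\nabla u_2|^{p-1}|\nabla w|\,dx$, a fixed nonzero $L^1(0,T)$ function of $s$; Gronwall then yields $\|w(t)\|^2\le\int_0^t(\text{nonzero data})$, which does not force $w\equiv 0$. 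You correctly diagnose the difficulty in your last paragraph but do not actually resolve it.

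The paper's way out is to abandon the $L^2$ energy and test instead with a $C^2$ regularization $g_\epsilon$ of the sign function (the analogue of the truncation $T_\delta/\delta$ already used in the uniqueness part of Proposition~\ref{Lionspge2}). Then $g_\epsilon'$ is supported in $\{|u_1-u_2|<2\epsilon\}$ and satisfies $|u_1-u_2|\,|g_\epsilon'|\le C$, so the troublesome term becomes
\begin{equation*}
4C\int_0^T\int_{\{|u_1-u_2|\le 2\epsilon\}}|\nabla(u_1-u_2)|\,|\nabla u_2|^{p-1}\,dx\,dt,
\end{equation*}
which tends to $0$ as $\epsilon\to 0$ because $\nabla(u_1-u_2)=0$ a.e.\ on $\{u_1=u_2\}$. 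Combined with \eqref{ibp} (applied to the pair $u_1-u_2$ and $g_\epsilon(u_1-u_2)$, which requires also handling $\langle[g_\epsilon(u_1-u_2)]_t,u_1-u_2\rangle$), this gives the $L^1$-contraction $\int_{\Omega(T-)}|u_1-u_2|\le\int_{\Omega(0)}|u_1-u_2|=0$. Your $L^2$/Gronwall route closes only in the special case where $A$ does not depend explicitly on $u$, which is exactly the simplification noted in the paper's remark after the proof.
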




\section{Construction of approximate solutions}
\label{Sec:approximations}
Let us divide the interval $[0,T]$ into sub-intervals $0=t_0 < t_1< \ldots < t_{N-1} < t_N=T$.
The points $t_i$ are chosen so that:
\begin{enumerate}

\item $\Omega(t_i)$ has Lipschitz boundary for all $i\in \{0,\ldots,N-1\}$,

\item  (\ref{L1})--(\ref{L5}) hold for a.e. $x \in \Omega(t_i)$ 
and for all $z\in\R$, $\xi\in\R^d$,

\item $t_i$ are Lebesgue points of $\psi(t)\in L^p(0,T;W_0^{1,p}(Q_0))$ and $\psi(t_i)\in W^{1,p}(Q_0)$,

\item $t_i$ are Lebesgue points of the map $t\in [0,T]\to A(t)\in L^1(Q_0\times (-R,R) \times B(0,R))$ for any $R > 0$, being $B(0,R)$ the open ball centered at zero with radius $R$,\label{punto4}

\item $\Delta:=\max_{k=0,\ldots,N-1} |t_k-t_{k+1}|\to 0$ as $N\to \infty$.
\end{enumerate}
Let $I_k = [t_k,t_{k+1})$. We iteratively solve the parabolic problem
\begin{equation}\label{model2}
\left\{
\begin{array}{cl}
\displaystyle
u^k_t = \mathrm{div} \left( A(t_k,x,u^k,\nabla u^k)\right),  
& t \in I_k,\ x\in \Omega(t_k)
\\ \\
\displaystyle u^k(t,x) 
= \psi(t,x), & t \in I_k,\ x\in \partial \Omega(t_k)
\\ \\
\quad \quad u^k(t_k,x) = & \left\{ 
\begin{array}{lr}
\lim_{t\to t_k-}u^{k-1}(t,x), &  x\in\Omega(t_{k})\cap \Omega(t_{k-1})
\\ \\
\psi(t_k,x), & x\in\Omega(t_{k})\backslash \Omega(t_{k-1}).
\end{array}
\right.
\end{array}
\right.
\end{equation}
If $t_0=0$ we let $u^0(0,x) = u_0(x)$.
Notice that the iterative initial condition for $t=t_k$ makes sense thanks to the continuity properties of $u^{k-1}$, see \eqref{cont}.


\subsection{Study of the model problem on a time slice}

Let $\Omega_0$ be an open bounded set in $\R^d$ with Lipschitz boundary.
Let $A(x,z,\xi)$ be such that (\ref{L1})--\eqref{L5} hold a.e. in $x \in \Omega_0$ and for all $z\in\R$, $\xi\in\R^d$.
Let us consider the problem
\begin{equation}\label{model2S}
\left\{
\begin{array}{lc}
\displaystyle
u_t = \mathrm{div} \left( A(x,u,\nabla u)\right)  \qquad \hbox{$t \in [0,T]$, $x\in \Omega_0$,}\\ \\
\displaystyle u(t,x) = \psi(t,x) 
\qquad \qquad \hbox{$t \in [0,T]$, $x\in \partial \Omega_0$,}
\\ \\
\displaystyle u(0,x) = u_0(x)
\qquad \qquad \hbox{$x\in \Omega_0$.}
\end{array}
\right.
\end{equation}
where $\psi$ satisfies
\eqref{psi1case1}--\eqref{psi2case1} and $u_0\in L^2(\Omega_0)$.

\begin{definition}\label{d2.1}
We say that a function $u\in L^1((0,T)\times \Omega_0)$ 
is a weak solution of (\ref{model2S}) if
$u\in L^p(0,T;W^{1,p}(\Omega_0))$, $A(x,u,\nabla u)\in L^{p'}((0,T)\times \Omega_0))$,
\begin{equation}\label{weakId2}
- \int_0^T \int_{\Omega_0} u\phi_t\, dxdt - \int_0^T \int_{\Omega_0} u_0 \phi(0)\, dx + \int_0^T \int_{\Omega_0} A(x,u,\nabla u)\cdot\nabla \phi \, dxdt= 0
\end{equation}
holds for all $\phi\in\mathcal{D}([0,T)\times \Omega_0)$, and
$$
u(t)-\psi(t) \in W_0^{1,p}(\Omega_0) \qquad \hbox{a.e. $t\in (0,T)$.}
$$
\end{definition}

Note that, by (\ref{L1}), if $u\in L^p(0,T;W^{1,p}(\Omega_0))$, then $A(x,u,\nabla u)\in L^{p'}((0,T)\times \Omega_0))$.

\begin{proposition}
\label{Lionspge2}
Problem \eqref{model2S} admits a unique weak solution in the sense of Definition \ref{d2.1}. 
\end{proposition}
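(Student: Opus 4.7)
The plan is to reduce to a homogeneous boundary-value problem and then invoke the abstract Lions theory of nonlinear parabolic equations with (pseudo)monotone operators.

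\textbf{Step 1: Reduction to zero boundary data.} Write $u = v + \psi$. Then $v$ should be a weak solution to
\[
v_t - \Div\bigl(A(x, v+\psi, \nabla(v+\psi))\bigr) = -\psi_t
\quad \text{in } (0,T)\times \Omega_0,
\]
with $v(t,\cdot)\in W_0^{1,p}(\Omega_0)$ for a.e. $t$ and $v(0,x)=u_0(x)-\psi(0,x)$. Thanks to Assumption \ref{ass4}, $\psi$ is an admissible lifting: $\psi\in L^p(0,T;W^{1,p}(Q_0))$ and $\psi_t\in L^{p'}(0,T;W^{-1,p'}(Q_0))$, so the right-hand side $-\psi_t$ belongs to $L^{p'}(0,T;W^{-1,p'}(\Omega_0))$ and the shifted initial datum lies in $L^2(\Omega_0)$.

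\textbf{Step 2: Functional setting.} Set $V:=W_0^{1,p}(\Omega_0)$, $H:=L^2(\Omega_0)$, forming a Gelfand triple $V\hookrightarrow H\hookrightarrow V^*$. For each $t\in(0,T)$ define the nonlinear operator $\mathcal{A}(t):V\to V^*$ by
\[
\langle \mathcal{A}(t)v,w\rangle := \int_{\Omega_0} A\bigl(x, v+\psi(t,x), \nabla v+\nabla\psi(t,x)\bigr)\cdot \nabla w\,dx.
\]
From (\ref{L1}) the operator is bounded, and from (\ref{L2}), after absorbing the $\nabla\psi$-contributions via Young's inequality, one obtains the coercivity estimate
\[
\langle \mathcal{A}(t)v,v\rangle \ge \tfrac{\alpha}{2}\|\nabla v\|_{L^p(\Omega_0)}^p - C\bigl(1+\|\nabla\psi(t)\|_{L^p(\Omega_0)}^p+\|d(t)\|_{L^1(\Omega_0)}\bigr).
\]
Monotonicity in the gradient slot follows from (\ref{L3}); the low-order $z$-dependence is controlled by (\ref{L4}) through the Lipschitz bound $C|z-w|\,|\xi|^{p-1}$, which after a Nemytskii-type argument makes $\mathcal{A}$ pseudomonotone from $L^p(0,T;V)$ to $L^{p'}(0,T;V^*)$ (the perturbation is strongly continuous since $L^p(0,T;V)\cap\{v_t\in L^{p'}(V^*)\}$ embeds compactly into $L^p(0,T;L^p(\Omega_0))$ by Aubin--Lions). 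Existence of $v\in L^p(0,T;V)$ with $v_t\in L^{p'}(0,T;V^*)$ solving the abstract Cauchy problem then follows from the standard theorem of J.L. Lions for pseudomonotone perturbations of monotone coercive operators (e.g., Chap.~II of \cite{LionsBook}); setting $u=v+\psi$ yields the desired weak solution, while (\ref{L1}) automatically gives $A(x,u,\nabla u)\in L^{p'}((0,T)\times\Omega_0)$.

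\textbf{Step 3: Uniqueness.} Let $u_1,u_2$ be two weak solutions in the sense of Definition \ref{d2.1}, and set $w:=u_1-u_2\in L^p(0,T;W_0^{1,p}(\Omega_0))$. Then $w_t\in L^{p'}(0,T;W^{-1,p'}(\Omega_0))$, $w(0)=0$, and by the classical integration-by-parts formula for Gelfand triples
\[
\tfrac{1}{2}\frac{d}{dt}\|w(t)\|_{L^2(\Omega_0)}^2 = -\int_{\Omega_0}\bigl(A(x,u_1,\nabla u_1)-A(x,u_2,\nabla u_2)\bigr)\cdot\nabla w\,dx.
\]
Split the right-hand side by inserting $A(x,u_2,\nabla u_1)$. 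By (\ref{L3}) the resulting term
$-\int(A(x,u_2,\nabla u_1)-A(x,u_2,\nabla u_2))\cdot\nabla w\,dx$ is nonpositive. For the remaining piece, (\ref{L4}) gives
\[
\Bigl|\int_{\Omega_0}\bigl(A(x,u_1,\nabla u_1)-A(x,u_2,\nabla u_1)\bigr)\cdot\nabla w\,dx\Bigr|
\le C\int_{\Omega_0}|w|\,|\nabla u_1|^{p-1}|\nabla w|\,dx,
\]
which, combined with Young's inequality, can be absorbed into the monotonicity term only when $p=2$ in a straightforward way; for general $p$ one must instead bound $|w|\,|\nabla u_1|^{p-1}|\nabla w|$ using H\"older's and the boundedness of $\nabla u_1$ in $L^p$, leading to a Gronwall inequality of the form $\|w(t)\|_{L^2}^2 \le \int_0^t g(s)\|w(s)\|_{L^2}^2\,ds$ with $g\in L^1(0,T)$. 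This yields $w\equiv 0$.

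The main obstacle is the non-monotone lower-order dependence of $A$ on $z$. In Step 2 it forces the use of pseudomonotonicity (hence an Aubin--Lions compactness argument) rather than a pure monotone-operator theorem; in Step 3 it is the source of the $|\nabla u_1|^{p-1}$ factor that must be carefully controlled so that a Gronwall argument closes. Once these technicalities are resolved, both existence and uniqueness follow from well-established abstract machinery.
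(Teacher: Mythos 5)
Your Steps 1 and 2 follow the paper's route exactly: the same lifting $u=v+\psi$, the same Gelfand triple $W_0^{1,p}(\Omega_0)\hookrightarrow L^2(\Omega_0)\hookrightarrow W^{-1,p'}(\Omega_0)$, and the same appeal to Lions' theory; your remark that the $z$-dependence forces a pseudomonotone (Leray--Lions) rather than purely monotone formulation is a fair gloss on what the paper means by ``$\widetilde A$ is a Leray--Lions operator.'' The gap is in Step 3. After inserting $A(x,u_2,\nabla u_1)$ and discarding the monotone piece via \eqref{L3}, you are left with
$$\tfrac12\tfrac{d}{dt}\|w\|_{L^2(\Omega_0)}^2 \;\le\; C\int_{\Omega_0}|w|\,|\nabla u_1|^{p-1}\,|\nabla w|\,dx,$$
and you assert this closes into a Gronwall inequality $\|w(t)\|_{L^2}^2\le\int_0^t g(s)\|w(s)\|_{L^2}^2\,ds$. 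It does not: any H\"older/Young splitting of the right-hand side leaves a factor $\|\nabla w\|_{L^p}$ (or a positive power of it), and there is nothing to absorb it into, because \eqref{L3} is plain monotonicity --- it only guarantees that the discarded term is $\ge 0$, not a coercive lower bound such as $\alpha\|\nabla w\|_{L^p}^p$. For the same reason your parenthetical claim that the absorption works ``in a straightforward way'' when $p=2$ is also unjustified under the stated hypotheses; it would require strong monotonicity of $\xi\mapsto A(x,z,\xi)$, which is not assumed.

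The paper's fix is to abandon the $L^2$ energy and test instead with the normalized truncation $T_\delta(w)/\delta$, where $T_\delta$ cuts off at height $\delta$. The troublesome term is then supported on $\{|w|\le\delta\}$, and the factor $|w|/\delta\le 1$ there cancels the $1/\delta$, leaving a bound of the form $C\int_{\{|w|\le\delta\}}|\nabla w|\,|\nabla u_1|^{p-1}\,dx$. Since the integrand is in $L^1(\Omega_0)$ and $\nabla w=0$ a.e.\ on $\{w=0\}$, dominated convergence sends this to $0$ as $\delta\to 0$, while $T_\delta(w)/\delta\to\mathrm{sign}(w)$, so one concludes that $\|w\|_{L^1}$ is nonincreasing and hence $w\equiv 0$. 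You should replace your Gronwall step with this truncation argument (or add a strong monotonicity hypothesis, which would change the proposition).
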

\begin{proof}
The proof is a standard application of the theory developed in 
\cite{LionsBook,LionsPaper}; we include it for completeness. 
We consider the auxiliary problem
\begin{equation}\label{model2Sv}
\left\{
\begin{array}{ll}
\displaystyle
v_t - \mathrm{div} \left( \widetilde{A}(t,x,v,\nabla v)\right)=- \psi_t  &\qquad \hbox{$t \in [0,T]$, $x\in \Omega_0$,}
\\ \\
\displaystyle v(t,x) = 0 
\qquad \qquad & \hbox{$t \in [0,T]$, $x\in \partial \Omega_0$,}
\\ \\
\displaystyle v(0,x) = u_{0}(x)-\psi(0,x)
\qquad \qquad & \hbox{$x\in \Omega_0$.}
\end{array}
\right.
\end{equation}
Here 
$$
\widetilde{A}(t,x,z,\xi) := A(x,z+\psi(t,x),\xi+\nabla \psi(t,x)).
$$
According to the notation in \cite{LionsBook,LionsPaper}, we let $H=L^2(\Omega_0)$, 
$$B=\left\{
\begin{array}{ll}
W_0^{1,p}(\Omega_0) & \mbox{if}\quad p\ge 2, 
\\ \\
W_0^{1,p}(\Omega_0) \cap L^2(\Omega_0) & \mbox{if}\quad 1<p<2,
\end{array}
\right.
$$ 
and $F=L^p(0,T;B)$, so that $B$ is dense in $H$ and 
$$
-\div \widetilde{A}: F \rightarrow F'=L^{p'}(0,T;B') \quad \mbox{and} \quad 
\psi_t \in F',
$$
with
$$B'=\left\{
\begin{array}{ll}
W^{-1,p'}(\Omega_0) & \mbox{if}\quad p\ge 2, 
\\ \\
W^{-1,p'}(\Omega_0) + L^2(\Omega_0) & \mbox{if}\quad 1<p<2.
\end{array}
\right.
$$ 
 Observe that, by our assumptions on $A(x,z,\xi)$ and $\psi$, 
$\widetilde{A}(t,x,z,\xi)$
is a Leray-Lions operator (see \cite{LionsBook,LionsPaper}). Indeed, the monotonicity requirement is satisfied thanks to \eqref{L3}. The coercivity condition follows from \eqref{L2} and Poincare's inequality in a standard way (lower order terms are estimated thanks to \eqref{L1}). Then thanks to Lions' theory there exists some $v \in F$ solving \eqref{model2Sv} in $F'$. In fact, this solution verifies that
$$
v \in L^p(0,T,B)\quad \mbox{and} \quad v_t \in  L^{p'}(0,T;B').
$$
Notice that $v \in C(0,T;L^2(\Omega_0))$ thanks to Lemma \ref{interpol} below.

Let now $u= v+\psi$. Then $u$ is a weak solution of (\ref{model2S}) with initial condition $u(0)=u_{0}$. Clearly $u \in L^p(0,T;W^{1,p}(\Omega_0))$, $u_t \in  L^{p'}(0,T;B')$
and
\begin{equation}\label{cont}
u \in C(0,T;L^2(\Omega_0)).
\end{equation}

To prove uniqueness let $u,v$ be two different solutions. Note that $u-v\in F$.
If $A$ does not depend on $u$, we multiply the equation for $(u-v)_t$ by $u-v$ and integrate by parts (see e.g. \cite[Chapter III]{Showalter}). Recalling \eqref{L3} we have that

\begin{eqnarray*}
\frac{1}{2} \frac{d}{dt} (u-v,u-v)_H &=& \langle (u-v)_t,u-v \rangle_{W^{-1,p'}(\Omega_0)-W_0^{1,p}(\Omega_0)}
\\
&=& \langle  \div (A(x,\nabla u)-A(x,\nabla v)), u-v\rangle_{W^{-1,p'}(\Omega_0)-W_0^{1,p}(\Omega_0)} 
\\
&=& - \langle A(x,\nabla u)-A(x,\nabla v), \nabla u-\nabla v\rangle_{L^{p'}(\Omega_0)-L^{p}(\Omega_0)}
\\
&\le& 0\,.
\end{eqnarray*}
Hence $\|u-v\|_2$ is nonincreasing and uniqueness follows. 

For the general case, consider $\delta>0$ and let
$$
T_\delta(s) = \left\{
\begin{array}{cc}
s & \mbox{if}\ -\delta \le s\le \delta,
\\
-\delta & \mbox{if}\ s<-\delta,
\\
\delta & \mbox{if}\ s>\delta. 
\end{array}
\right.
$$
Clearly $T_\delta(u-v)\in F$ and again after multiplication of the equation for $(u-v)_t$ by $T_\delta(u-v)/\delta$ and integration by parts we obtain that
$$
\frac{1}{2} \frac{d}{dt} (u-v,T_\delta(u-v)/\delta)_H =  - \frac{1}{\delta}\langle \nabla T_\delta(u - v), A(x,u,\nabla u)-A(x,v,\nabla v)\rangle_{L^{p}(\Omega_0)-L^{p'}(\Omega_0)}
$$
$$
\le - \frac{1}{\delta}\langle (\nabla u - \nabla v)\chi_{\{|u-v|\le \delta\}}, A(x,u,\nabla v)-A(x,v,\nabla v)\rangle_{L^{p}(\Omega_0)-L^{p'}(\Omega_0)}.
$$
Then, using \eqref{L4} we get
$$
\frac{1}{2} \frac{d}{dt} (u-v,T_\delta(u-v)/\delta)_H \le \frac{C}{\delta} \int_{\{|u-v|\le \delta\}} |u-v| |\nabla u -\nabla v| |\nabla v|^{p-1}\, dx \le C \int_{\{|u-v|\le \delta\}} |\nabla u -\nabla v| |\nabla v|^{p-1}\, dx.
$$
The term on the far right converges to zero when $\delta \to 0$, since the integrand is in $L^1(\Omega_0)$ and $\nabla (u-v)=0$ a.e. where $u-v=0$. As $
T_\delta/\delta(u-v) \rightarrow \mbox{sign}(u-v)$, we get that $\|u-v\|_1$ is non-increasing, and we conclude the proof.
\end{proof}

The following continuity result is standard (see for instance 
\cite[Ch. 2, Rem. 1.2]{LionsBook} or \cite{Showalter}).

\begin{lemma}\label{interpol}
Let $V$ be a reflexive Banach space with dual $V'$. Let $H$ be a Hilbert space that we identify with its dual. Assume that $V\subset H \subset V'$ with the injection $V\subset H$ being dense. Then, $u \in L^p(0,T;V)$ together with $u_t\in L^{p'}(0,T;V')$ imply that there is a representative of $u$ which is continuous from $[0,T]$ to $H$.  
\end{lemma}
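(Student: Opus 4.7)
The plan is to follow the classical Lions--Magenes argument. Since $u\in L^p(0,T;V)\subset L^1(0,T;V')$ (using the continuous inclusion $V\hookrightarrow V'$ via $H$) and $u_t\in L^{p'}(0,T;V')\subset L^1(0,T;V')$, the function $u$ belongs to $W^{1,1}(0,T;V')$ and therefore, after modification on a negligible set, has a representative $\tilde u$ that is absolutely continuous from $[0,T]$ into $V'$; in particular $\tilde u\in C([0,T];V')$. This $\tilde u$ is our candidate; the task is to upgrade its continuity from $V'$ to $H$.

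The heart of the argument is the energy identity
\begin{equation}\label{eq:energy_plan}
\|\tilde u(t)\|_H^2 - \|\tilde u(s)\|_H^2 \;=\; 2\int_s^t \langle u_\tau(\tau),\,u(\tau)\rangle_{V',V}\,d\tau,\qquad 0\le s\le t\le T.
\end{equation}
I would prove it by mollification. Extend $u$ to a function $\bar u$ on $\R$ (reflecting across the endpoints and localizing with a smooth cut-off) so that $\bar u\in L^p(\R;V)$ and $\bar u_t\in L^{p'}(\R;V')$, and set $u_n:=\rho_n\ast \bar u\in C^\infty(\R;V)$ for a standard sequence of mollifiers $\rho_n$. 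Classical properties of convolution yield $u_n\to u$ in $L^p(0,T;V)$ and $(u_n)'\to u_t$ in $L^{p'}(0,T;V')$. For the smooth $V$-valued functions $u_n$, the elementary chain rule gives $\tfrac{d}{dt}\|u_n\|_H^2 = 2\langle (u_n)',u_n\rangle_{V',V}$ pointwise. Integrating on $[s,t]$ and passing to the limit using continuity of the duality pairing $L^{p'}(0,T;V')\times L^p(0,T;V)\to \R$ yields \eqref{eq:energy_plan} for a.e.\ $s,t$; since the right-hand side is continuous in $s,t$, the identity then extends to every pair.

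From \eqref{eq:energy_plan} the map $t\mapsto \|\tilde u(t)\|_H^2$ is continuous on $[0,T]$, so $\tilde u(t)\in H$ for every $t$ and $\|\tilde u(\cdot)\|_H$ is bounded. Combined with $\tilde u\in C([0,T];V')$ and the density $V\subset H\subset V'$, this gives weak continuity $\tilde u(t)\rightharpoonup \tilde u(t_0)$ in $H$ as $t\to t_0$: any weak $H$-cluster point of $\tilde u(t)$ is necessarily bounded in $H$ and must coincide with the strong $V'$-limit $\tilde u(t_0)$. The Hilbert-space fact that weak convergence together with convergence of norms implies strong convergence then delivers $\tilde u\in C([0,T];H)$, as desired.

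The main obstacle is the rigorous justification of \eqref{eq:energy_plan}, and in particular ensuring that the mollification preserves both the $L^p(V)$ and $W^{1,p'}(V')$ regularity uniformly up to $t=0$ and $t=T$; this is precisely the role of the preliminary extension step. The remaining steps — chain rule for smooth $V$-valued maps, passage to the limit in the duality pairing, and the weak-plus-norm criterion in Hilbert space — are all standard.
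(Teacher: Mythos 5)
Your argument is correct and is precisely the classical Lions--Magenes/Showalter proof (extension and mollification, the energy identity $\|\tilde u(t)\|_H^2-\|\tilde u(s)\|_H^2=2\int_s^t\langle u_\tau,u\rangle\,d\tau$, then weak $H$-continuity plus continuity of the norm), which is exactly the argument behind the references the paper cites; the paper itself offers no proof, stating only that the result is standard. No gaps worth flagging.
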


Since $u^k\in C(t_k,t_{k+1};L^2(\Omega(t_k)))$ we can define the traces
$$
u^k(t_k+) := \lim_{t\to t_k+} u^k(t)
\qquad 
u^k(t_{k+1}-) := \lim_{t\to t_{k+1}-} u^k(t),
$$ 
where the limit is taken in $L^2(\Omega(t_k))$.

\subsection{The approximate solutions $u^\Delta$}\label{sect:together}

We now let
$$
{\Omega}^\Delta:=\{(t,x): t\in [t_k,t_{k+1}), \, x\in \Omega(t_k), \,\, k=0,\ldots,N-1\}
$$
$$
=  \cup_{k=1,\ldots, N-1}  [t_k,t_{k+1}) \times \Omega(t_k).
$$
Notice that ${\Omega}^\Delta$ does not depend only on 
$\Delta=\max_{k=0,\ldots,N-1} |t_k-t_{k+1}|$, but depends on the entire sequence
$\{t_k\}_k$.

\begin{lemma}
\label{basic_fact}
$\Omega^\Delta$ converges to $\widetilde{\Omega}$ in the Hausdorff sense. 
As a consequence
$\chi_{\Omega^\Delta}\rightarrow \chi_{\widetilde{\Omega}}$ strongly in $L^1(Q_T)$ (hence in $L^p(Q_T)$ for all $p<\infty$).
\end{lemma}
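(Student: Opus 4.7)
My plan is to establish the Hausdorff convergence first and then deduce the $L^1$ convergence of characteristic functions from it together with the finiteness of the boundary measure $\H^d(\partial\widetilde\Omega)$, which is guaranteed by Assumption \ref{ass1} (namely, $\widetilde\Omega$ is a bounded open set with Lipschitz boundary).

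One direction of the Hausdorff bound is essentially immediate: if $p=(t,x)\in\overline{\Omega^\Delta}$, then $t\in[t_k,t_{k+1}]$ and $x\in\overline{\Omega(t_k)}$ for some $k$, so $(t_k,x)\in\overline{\widetilde\Omega}$ sits within distance $\Delta$ of $p$; hence $\sup_{p\in\overline{\Omega^\Delta}}\dist(p,\overline{\widetilde\Omega})\leq\Delta$. For the reverse direction I would invoke the uniform interior cone property of a bounded Lipschitz domain: there exist $h_0>0$ and $\theta_0\in(0,\pi/2)$ such that every $q\in\overline{\widetilde\Omega}$ is the apex of a cone $C_q\subset\overline{\widetilde\Omega}$ of height $h_0$, opening angle $\theta_0$, and some axis direction $a_q$. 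For $\Delta$ small I would shrink the cone to height $h=\sqrt{\Delta}$ and consider the midpoint $q'=q+(h/2)\,a_q$, noting that $\dist(q',\partial\widetilde\Omega)\geq(h/2)\sin\theta_0$, which exceeds $\Delta$ once $\Delta\leq(\sin\theta_0)^2/4$. Consequently $B(q',\Delta)\subset\widetilde\Omega$; writing $q'=(t',x')$ and taking the unique $k$ with $t_k\leq t'<t_{k+1}$, the point $(t_k,x')$ still lies in $\widetilde\Omega$, so $x'\in\Omega(t_k)$ and $q'\in\Omega^\Delta$. Therefore $\dist(q,\overline{\Omega^\Delta})\leq|q-q'|=\sqrt{\Delta}/2$, and combining both directions yields $d_H(\overline{\Omega^\Delta},\overline{\widetilde\Omega})\leq\sqrt{\Delta}/2\to 0$.

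For the $L^1$ statement, the same reasoning yields the sharper bulk inclusions $\widetilde\Omega_{-\Delta}:=\{q\in\widetilde\Omega:\dist(q,\partial\widetilde\Omega)>\Delta\}\subset\Omega^\Delta\subset(\widetilde\Omega)_\Delta$, since any $q\in\widetilde\Omega_{-\Delta}$ already satisfies $B(q,\Delta)\subset\widetilde\Omega$ and the same time-sampling argument applies directly to $q$ without passing through a cone midpoint. Hence $\Omega^\Delta\triangle\widetilde\Omega\subset(\widetilde\Omega)_\Delta\setminus\widetilde\Omega_{-\Delta}=\{q\in Q_T:\dist(q,\partial\widetilde\Omega)\leq\Delta\}$, a $\Delta$-tubular neighborhood of $\partial\widetilde\Omega$ whose Lebesgue measure is at most $C\Delta\,\H^d(\partial\widetilde\Omega)$ by the standard Minkowski content estimate for Lipschitz boundaries. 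Since $\H^d(\partial\widetilde\Omega)<\infty$, this gives $\|\chi_{\Omega^\Delta}-\chi_{\widetilde\Omega}\|_{L^1(Q_T)}=|\Omega^\Delta\triangle\widetilde\Omega|=O(\Delta)\to 0$, and $L^p$ convergence for every $p<\infty$ follows at once from the identity $\|\chi_A-\chi_B\|_{L^p}^p=|A\triangle B|$.

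The main obstacle is the reverse Hausdorff bound at points $q\in\partial\widetilde\Omega$, where no interior room is a priori available: the Lipschitz structure enters essentially through the uniform interior cone condition, and the scaling $h=\sqrt{\Delta}$ is calibrated so that the guaranteed interior margin $(h/2)\sin\theta_0$ asymptotically dominates the temporal sampling gap $\Delta$.
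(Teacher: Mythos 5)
Your proof is correct, but it proceeds quite differently from the paper's. The paper disposes of the Hausdorff convergence in two lines: it asserts the claim is elementary when $\widetilde\Omega$ is a polyhedron and then transfers it to a general Lipschitz $\widetilde\Omega$ by polyhedral approximation in the Hausdorff metric; no rate and no explicit mechanism is given, and the $L^1$ statement is left as an immediate consequence. You instead give a direct, quantitative argument: the inclusion $\Omega^\Delta\subset(\widetilde\Omega)_\Delta$ is the trivial time-sampling estimate, while the reverse containment is extracted from the uniform interior cone property of Lipschitz domains, with the height $\sqrt{\Delta}$ calibrated so that the interior margin $(\sqrt{\Delta}/2)\sin\theta_0$ beats the sampling gap $\Delta$. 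This buys an explicit rate $d_H\le\sqrt{\Delta}/2$ and, more importantly, the sharper sandwich $\widetilde\Omega_{-\Delta}\subset\Omega^\Delta\subset(\widetilde\Omega)_\Delta$, from which $|\Omega^\Delta\triangle\widetilde\Omega|=O(\Delta)$ follows via the Minkowski content of $\partial\widetilde\Omega$ --- a cleaner route to the $L^1$ claim than Hausdorff convergence alone, which in general does not control symmetric differences. (For the $L^1$ step you could even avoid Minkowski content: the tubular neighborhoods decrease to $\partial\widetilde\Omega$, which is Lebesgue-null, so their measures tend to zero by monotone convergence.) Two cosmetic points: state the cone property as the open cone minus its apex being contained in the open set $\widetilde\Omega$, since you need $\dist(q',\partial\widetilde\Omega)\ge(\sqrt{\Delta}/2)\sin\theta_0>0$; and note that interior points far from $\partial\widetilde\Omega$ are handled trivially by the inclusion $\widetilde\Omega_{-\Delta}\subset\Omega^\Delta$, so the cone is only needed near the boundary.
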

\begin{proof}
The Hausdorff convergence of $\Omega^\Delta$ to $\widetilde{\Omega}$ 
can be easily verified when $\widetilde\Omega$ is a polyhedron.
The claim follows by approximating
a generic $\widetilde\Omega$ with Lipschitz boundary
with polyhedra, in the topology generated by the Hausdorff distance. 
\end{proof}
%



We now glue the 
solutions $u^k(t,x)$ of (\ref{model2}) together and define
the approximate solutions
\begin{eqnarray}
\label{udelta}
u^\Delta(t,x) &:=& \sum_{k=0}^{N-1} \chi_{[t_k,t_{k+1})}(t) u^k(t,x)\chi_{\Omega(t_k)}(x),
\\
\label{p7def}
\tilde u^\Delta(t,x) &:=& \sum_{k=0}^{N-1} \chi_{[t_k,t_{k+1})}(t) (u^k(t,x)\chi_{\Omega(t_k)}(x) + \psi(t,x) \chi_{Q_0\setminus \Omega(t_k)}(x)),
\end{eqnarray}
for $(t,x)\in Q_T$. 
When we write $u^k(t,x)\chi_{\Omega(t_k)}(x)$ 
in the above formulae we intend the function which coincides with 
$u^k(t,x)$ in $\Omega(t_k)$ and it is equal to zero outside $\Omega(t_k)$.

In the sequel we shall prove 
the compactness of $u^\Delta$ and $\tilde u^\Delta$
as $\Delta\to 0$.

\subsection{Estimates on $u^\Delta$}\label{sect:BE}

We now derive some estimates on the approximate solutions $u^\Delta$
defined in \eqref{udelta}.

\begin{lemma}
\label{linftybound}
Assume that $\|\psi\|_\infty, \|u_0\|_\infty \le C$ for some $C>0$.
 Then $\|u^\Delta\|_{L^\infty(\Omega^\Delta)} \le C$ for any $t>0$.
\end{lemma}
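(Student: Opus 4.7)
The plan is to proceed by induction on $k$, showing $\|u^k\|_{L^\infty((t_k,t_{k+1})\times \Omega(t_k))} \le C$ and then assembling these bounds via \eqref{udelta}. The base case $k=0$ relies on $\|u_0\|_\infty\le C$ and $\|\psi\|_\infty\le C$. For the inductive step, the initial datum of $u^k$ at time $t_k$ is, on $\Omega(t_k)\cap\Omega(t_{k-1})$, the $L^2$--trace $u^{k-1}(t_k-)$ whose $L^\infty$ norm is controlled by the inductive hypothesis, and, on $\Omega(t_k)\setminus\Omega(t_{k-1})$, the function $\psi(t_k,\cdot)$ which is bounded by $C$ by assumption. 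So on each slice we have a parabolic problem on the cylinder $I_k\times\Omega(t_k)$ with $L^\infty$ initial/boundary data bounded by $C$, and it suffices to show a maximum principle for the problem \eqref{model2} on a single slice.

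The heart of the argument is a standard Stampacchia truncation. I would test the equation for $u^k$ with the non-negative function $(u^k-C)_+$, which is an admissible test because $u^k-\psi\in W_0^{1,p}(\Omega(t_k))$ combined with $\psi\le C$ gives $(u^k-C)_+\in W_0^{1,p}(\Omega(t_k))$ (the trace of $u^k$ on $\partial\Omega(t_k)$ equals $\psi\le C$). Applying the chain rule (legitimate by the regularity produced in Proposition \ref{Lionspge2} and Lemma \ref{interpol}) one obtains, for $t\in I_k$,
\begin{equation*}
\frac{1}{2}\int_{\Omega(t_k)}(u^k(t)-C)_+^2\,dx-\frac{1}{2}\int_{\Omega(t_k)}(u^k(t_k)-C)_+^2\,dx
= -\int_{t_k}^{t}\!\int_{\Omega(t_k)}A(t_k,x,u^k,\nabla u^k)\cdot\nabla u^k\,\chi_{\{u^k>C\}}\,dx\,ds.
\end{equation*}
By \eqref{strong_monotonicity}, the right-hand side is non-positive. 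Since $(u^k(t_k)-C)_+\equiv0$ by the inductive hypothesis on the initial datum, we conclude $(u^k(t)-C)_+\equiv 0$ for every $t\in I_k$, i.e.\ $u^k\le C$.

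For the lower bound $u^k\ge -C$, I would either repeat the same argument with the test function $-(u^k+C)_-$, or reduce to the upper-bound case by noting that $v:=-u^k$ solves an analogous equation with the flux $\tilde A(t,x,z,\xi):=-A(t,x,-z,-\xi)$, which still satisfies the structural assumptions \eqref{L1}--\eqref{L5}, with boundary/initial datum $-\psi$, $-u_0$ bounded by $C$. Combining upper and lower bounds yields $\|u^k\|_\infty\le C$, and summing over $k$ in the definition of $u^\Delta$ gives the claim.

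The only delicate point, which I would make fully rigorous via a density/approximation argument, is the use of $(u^k-C)_+$ as a test function in the weak formulation \eqref{weakId2}: one needs $u^k_t$ to pair with $(u^k-C)_+$ in the duality $\langle F',F\rangle$, and the chain rule identity $\langle u^k_t,(u^k-C)_+\rangle=\tfrac{1}{2}\tfrac{d}{dt}\|(u^k-C)_+\|_{L^2}^2$. This is standard (see e.g.\ \cite{Showalter}) given that $u^k\in L^p(I_k;W^{1,p}(\Omega(t_k)))$, $u^k_t\in L^{p'}(I_k;B')$ and $u^k\in C(I_k;L^2(\Omega(t_k)))$, as established in the proof of Proposition \ref{Lionspge2}.
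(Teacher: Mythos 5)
Your proof is correct and follows essentially the same route as the paper: a Stampacchia truncation with $(u^k-C)_+$ (and its negative counterpart), using \eqref{strong_monotonicity} to discard the flux term, propagated across the time slices by induction on $k$. The paper compresses the induction into the remark that it suffices to treat the first slice, while you spell it out and also justify the admissibility of the test function; both are fine.
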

\begin{proof}
It is enough to prove the estimate in $(0,t_1)\times \Omega(0)$. Let $[\cdot]^+$ denote the positive part (resp. $[\cdot]^-$ the negative part) and let $C\ge \|\psi\|_\infty$. Then the pairing of $[u-C]^+$ with $u_t^\Delta$ makes sense; multiplying \eqref{model2S} by $[u-C]^+$ and integrating by parts we get to
$$
\frac{1}{2} \frac{d}{dt} \int_{\Omega(0)} ([u^\Delta(t)-C]^+)^2\ dx = \int_{\Omega(0)} [u^\Delta-C]^+ \div A(0,x,u^\Delta,\nabla u^\Delta) \ dx
$$
$$
= - \int_{\Omega(0)} A(0,x,u^\Delta,\nabla u^\Delta) \nabla ([u^\Delta-C]^+)\ dx.
$$
There are no boundary terms present thanks to our choice of $C$. 
Note that $\nabla ([u^\Delta-C]^+) = \chi_{\{u^\Delta>C\}} \nabla u^\Delta$, so that we can use \eqref{strong_monotonicity} to ensure that the time derivative above  
is nonpositive. Hence,
$$
\int_{\Omega(0)} ([u^\Delta(t)-C]^+)^2 \ dx \le \int_{\Omega(0)} ([u_0-C]^+)^2 \ dx.
$$
Thus, if $u_0 \le C$ then $u^\Delta(t)\le C$ too for any $t \in [0,t_1)$. This works in the same way for the time derivative of the integral of $([u^\Delta+C]^-)^2$, with inequalities reversed. If we now choose $C = \max\{\|u_0\|_\infty,\|\psi\|_\infty \}$, we deduce that $\|u^\Delta(t)\|_\infty \le C$.
\end{proof}

\begin{lemma}
\label{nabla_bound}
There holds 
$$
\sum_{k=0}^{N-1}\int_{t_k}^{t_{k+1}} \int_{\Omega(t_k)} \vert \nabla  u^\Delta(t)\vert^p \, dx dt  \leq C,
$$
for some constant $C>0$ depending only on $\widetilde \Omega$, on $\psi$
and on the structural constants in Assumption \ref{ass3}.
\end{lemma}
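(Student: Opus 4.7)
The plan is to obtain the estimate by carrying out a standard energy argument slice by slice and then summing, exploiting the explicit form of the initial datum chosen in \eqref{model2} to telescope the time-boundary contributions.

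On the $k$-th slab $I_k\times\Omega(t_k)$, the natural test function is $u^k-\psi$: after extension by zero outside $\Omega(t_k)$ it belongs to $L^p(I_k;W_0^{1,p}(\Omega(t_k)))$, while its time derivative lies in the corresponding dual space, as guaranteed by the proof of Proposition~\ref{Lionspge2}. Pairing the equation $u^k_t=\Div A(t_k,x,u^k,\nabla u^k)$ with $u^k-\psi$, integrating by parts in space, and using the chain rule in the Hilbert triple $W_0^{1,p}\subset L^2\subset W^{-1,p'}$ (Lemma~\ref{interpol}) yields, for each $k$,
\begin{equation*}
\tfrac12\bigl[\|u^k-\psi\|_{L^2(\Omega(t_k))}^2\bigr]_{t_k}^{t_{k+1}}
+\int_{I_k}\!\!\int_{\Omega(t_k)}\psi_t\,(u^k-\psi)\,dx\,dt
+\int_{I_k}\!\!\int_{\Omega(t_k)}A(t_k,x,u^k,\nabla u^k)\cdot\nabla(u^k-\psi)\,dx\,dt=0.
\end{equation*}
Next, split the $A$-integral as $A\cdot\nabla u^k-A\cdot\nabla\psi$. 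By \eqref{L2}, $A\cdot\nabla u^k\geq\alpha|\nabla u^k|^p-d(t_k,x)$; by \eqref{L1} together with Young's inequality, for any $\varepsilon>0$,
\begin{equation*}
|A\cdot\nabla\psi|\leq\bigl(c|\nabla u^k|^{p-1}+b(t_k,x)\bigr)|\nabla\psi|
\leq\varepsilon|\nabla u^k|^p+C_\varepsilon\bigl(|\nabla\psi|^p+b(t_k,x)^{p'}\bigr).
\end{equation*}
Taking $\varepsilon=\alpha/2$ and absorbing leaves $\tfrac{\alpha}{2}\int|\nabla u^k|^p$ as the leading term on the left-hand side.

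Now sum over $k=0,\ldots,N-1$. Writing $I_k:=\tfrac12\|u^k(t_k+)-\psi(t_k)\|_{L^2(\Omega(t_k))}^2$ and $S_k:=\tfrac12\|u^k(t_{k+1}-)-\psi(t_{k+1})\|_{L^2(\Omega(t_k))}^2$, the prescription of the initial datum in \eqref{model2} gives, for $k\geq 1$,
\begin{equation*}
I_k=\tfrac12\|u^{k-1}(t_k-)-\psi(t_k)\|_{L^2(\Omega(t_k)\cap\Omega(t_{k-1}))}^2\leq S_{k-1},
\end{equation*}
because on $\Omega(t_k)\setminus\Omega(t_{k-1})$ the value is reset to $\psi(t_k)$, so the integrand vanishes there. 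Therefore $\sum_{k=0}^{N-1}(I_k-S_k)\leq I_0$, and $I_0$ is controlled by $(\|u_0\|_\infty+\|\psi\|_\infty)^2|\Omega(0)|$. The remaining contributions are handled using Lemma~\ref{linftybound}: since $\|u^\Delta\|_\infty\leq C$, one has $\bigl|\sum_k\int\psi_t(u^k-\psi)\bigr|\leq 2C\|\psi_t\|_{L^1(Q_T)}$, and $\|\nabla\psi\|_{L^p(Q_T)}$ is finite by Assumption~\ref{ass4}. The piecewise-constant sums $\sum_k(t_{k+1}-t_k)\int_{Q_0}d(t_k,x)\,dx$ and $\sum_k(t_{k+1}-t_k)\int_{Q_0}b(t_k,x)^{p'}\,dx$ are Riemann sums converging to $\|d\|_{L^1(Q_T)}$ and $\|b\|_{L^{p'}(Q_T)}^{p'}$, respectively, thanks to the Lebesgue-point property (item~\ref{punto4} in Section~\ref{Sec:approximations}), so they are uniformly bounded for small $\Delta$. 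Collecting everything produces a bound independent of $N$.

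The main obstacle is precisely the telescoping when the sections $\Omega(t)$ are not monotone in time: a naive addition of the slice-by-slice energy identities would yield a contribution proportional to the number $N$ of subintervals. The saving comes from the specific initialization in \eqref{model2}: on any newly appearing piece $\Omega(t_k)\setminus\Omega(t_{k-1})$ the solution is set equal to $\psi(t_k)$, killing that part of $I_k$ outright, while on any vanishing piece $\Omega(t_{k-1})\setminus\Omega(t_k)$ the discarded $L^2$-mass contributes with the favourable sign to $S_{k-1}-I_k\geq 0$, and the whole cascade collapses to the single initial term $I_0$.
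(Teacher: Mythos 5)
Your proposal is correct and follows essentially the same route as the paper: testing each slab with $u^k-\psi$, splitting the flux integral via \eqref{L2} and \eqref{L1} plus Young's inequality, controlling the $\psi_t$ term through the $L^\infty$ bound, and summing over $k$ with the telescoping of the time-boundary terms driven by the reset to $\psi$ on $\Omega(t_k)\setminus\Omega(t_{k-1})$. Your explicit treatment of the telescoping inequality $I_k\le S_{k-1}$ and of the Riemann sums for the frozen-time coefficients $d(t_k,\cdot)$, $b(t_k,\cdot)$ is a welcome elaboration of steps the paper leaves implicit.
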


\begin{proof}
We fix $k$ and notice that the pairing of $u^\Delta-\psi$ with $u_t^k$ on $(t_k,t_{k+1})\times \Omega(t_k)$ makes sense. After integration by parts we get
$$
\frac{1}{2} \frac{d}{dt} \int_{\Omega(t_k)} (u^\Delta-\psi)^2 \ dx = -\int_{\Omega(t_k)} \nabla (u^\Delta-\psi) A(t_k,x,u^\Delta,\nabla u^\Delta)\ dx - \int_{\Omega(t_k)}(u^\Delta-\psi) \psi_t\ dx.
$$
Notice that the last term is well-defined
thanks to our assumptions on $\psi$ and to Lemma \ref{linftybound}. Integrating the former equality on $[t_k,t_{k+1}]$,
we obtain
$$
 \frac{1}{2}\int_{\Omega(t_k)} (u^\Delta(t_{k+1})-\psi)^2 \ dx = \frac{1}{2}\int_{\Omega(t_k)} (u^\Delta(t_k)-\psi)^2 \ dx - \int_{t_k}^{t_{k+1}} \int_{\Omega(t_k)} (u^\Delta-\psi) \psi_t \ dxdt 
 $$
 $$
  + \int_{t_k}^{t_{k+1}} \int_{\Omega(t_k)} \nabla \psi A(t_k,x,u^\Delta,\nabla u^\Delta)\ dxdt - \int_0^T \int_{\Omega(t_k)} \nabla u^\Delta A(t_k,x,u^\Delta,\nabla u^\Delta)\ dxdt
$$
$$
=: I +II +III + IV.
$$
Let us now control the last three terms. The second one can be easily 
estimated as
$$
II \le 2 \bar C \int_{t_k}^{t_{k+1}} \int_{\Omega(t_k)} |\psi_t| \ dxdt, \quad \bar C:= \max\{\|\psi\|_\infty,\|u_0\|_\infty\}.
$$
Concerning the fourth term, using \eqref{L2} we get
$$
IV \le -  \int_{t_k}^{t_{k+1}} \int_{\Omega(t_k)} \alpha |\nabla u^\Delta|^{p}\ dxdt + \int_{t_k}^{t_{k+1}} \int_{\Omega(t_k)} |d(t,x)| \ dxdt.
$$
In a similar way, using \eqref{L1} we obtain
$$
III\le  \int_{t_k}^{t_{k+1}} \int_{\Omega(t_k)} c |\nabla \psi|  |\nabla u^\Delta|^{p-1}\ dxdt +  \int_{t_k}^{t_{k+1}} \int_{\Omega(t_k)} |\nabla \psi| b(t,x)\ dxdt= A+B.
$$
Let us estimate $A$ and $B$. For that we use Young's inequality with weights:
$$
 a\, b \le \frac{\eps^pa^p}{p} + \frac{b^{p'}}{p' \eps^{p'}}, \quad \epsilon>0, \mbox{being}\ p, p' \ \mbox{given by}\ \eqref{L1}.
$$
Then
$$
B \le \frac{1 }{p} \Vert \nabla \psi\Vert_{L^p([t_k,t_{k+1}]\times \Omega(t_k))}^p + \frac{1 }{p'} \Vert   b\Vert_{L^{p'}([t_k,t_{k+1}]\times \Omega(t_k))}^p
$$
and
 $$
 A\le \frac{c \epsilon^p}{p} \Vert \nabla \psi\Vert_{L^p([t_k,t_{k+1}]\times \Omega(t_k))}^p +  \frac{ c}{p' \epsilon^{p'}}
\Vert \nabla u^{\Delta}\Vert_{L^p([t_k,t_{k+1}]\times \Omega(t_k))}^{p}
 $$
for any $\epsilon>0$. Let us choose $\epsilon$ so that $c/(p' \epsilon^{p'})=\alpha/2$. Collecting all the estimates, we obtain
$$
\frac{1}{2}\int_{\Omega(t_k)} (u^\Delta(t_{k+1})-\psi)^2 \ dx +\frac{\alpha}{2} \int_{t_k}^{t_{k+1}} \int_{\Omega(t_k)}  |\nabla u^\Delta|^{p}\ dxdt
$$
$$
\le \frac{1}{2}\int_{\Omega(t_k)} (u^\Delta(t_k)-\psi)^2 \ dx + 2 
\bar C \int_{t_k}^{t_{k+1}} \int_{\Omega(t_k)} |\psi_t| \ dxdt +\frac{c \epsilon^p}{p} \Vert \nabla \psi\Vert_{L^p([t_k,t_{k+1}]\times \Omega(t_k))}^p
$$
$$
+\frac{1 }{p} \Vert \nabla \psi\Vert_{L^p([t_k,t_{k+1}]\times \Omega(t_k))}^p + \frac{1 }{p'} \Vert   b\Vert_{L^{p'}([t_k,t_{k+1}]\times \Omega(t_k))}^p + \int_{t_k}^{t_{k+1}} \int_{\Omega(t_k)} |d(t,x)| \ dxdt.
$$
By summing up the previous inequalities from $k=0$ to $k=N-1$, we get 
\begin{eqnarray*}
& & \frac{1}{2}  \int_{\Omega(t_{N-1})} (u^\Delta(t_N)-\psi)^{2}\, dx +
\frac{\alpha}{2} \sum_{k=0}^{N-1}\int_{t_k}^{t_{k+1}} \int_{\Omega(t_k)} \vert \nabla  u^\Delta(t)\vert^p \, dx dt \\ & \leq &
\frac{1}{2}  \int_{\Omega(0)} (u^\Delta(0)-\psi)^{2}\, dx + \frac{1 }{p'} \sum_{k=0}^{N-1} \Vert   b\Vert_{L^{p'}([t_k,t_{k+1}]\times \Omega(t_k))}^p  \\
& + & \frac{1 }{p}\left(1+ c\left(\frac{2c}{\alpha p'}\right)^\frac{p}{p'} 
\right)\sum_{k=0}^{N-1}  \Vert \nabla \psi\Vert_{L^p([t_k,t_{k+1}]\times \Omega(t_k))}^p 
\\ & + &
2\bar C  \sum_{k=0}^{N-1} \int_{t_k}^{t_{k+1}} \int_{\Omega(t_k)}  |\psi_t|  \, dx dt + \sum_{k=0}^{N-1} \int_{t_k}^{t_{k+1}} \int_{\Omega(t_k)}  |d(t,x)|  \, dx dt.
\end{eqnarray*}
On the aid of Lemma \ref{linftybound} the thesis follows.
\end{proof}

Recalling the definition of $\tilde u^\Delta$ and the assumptions on $\psi$,
from Lemma \ref{nabla_bound} we obtain the following result:

\begin{corollary}\label{cor1p}
There exists $C>0$ depending only on $\widetilde \Omega$, on $\psi$
and on the structural constants in Assumption \ref{ass3}, such that 
$$
\|\tilde u^\Delta\|_{L^p(0,T;W^{1,p}(Q_0))}\le C\,.
$$
In particular, the sequence $\{\tilde u^\Delta\}$ 
is weakly relatively compact in $L^p(0,T;W^{1,p}(Q_0))$. 
\end{corollary}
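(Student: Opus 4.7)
The plan is to exploit the fact that $\tilde u^\Delta$ was designed precisely to reconcile the bounds of Lemma \ref{nabla_bound} (which only control $\nabla u^\Delta$ on the time-sliced domain $\Omega^\Delta$) with the ambient space $L^p(0,T;W^{1,p}(Q_0))$. The gain of passing from $u^\Delta$ to $\tilde u^\Delta$ is that extending by $\psi$ outside $\Omega(t_k)$ produces a function with a well-defined weak gradient on the whole of $Q_0$, because the matching is along $\partial\Omega(t_k)$ where $u^k$ and $\psi$ share the same trace.

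The first step is to justify that $\tilde u^\Delta(t,\cdot)\in W^{1,p}(Q_0)$ for a.e.\ $t$. On each interval $[t_k,t_{k+1})$, by Proposition \ref{Lionspge2} applied with $\Omega_0=\Omega(t_k)$ we have $u^k(t,\cdot)-\psi(t,\cdot)\in W_0^{1,p}(\Omega(t_k))$ for a.e.\ $t$. Since $\Omega(t_k)$ has Lipschitz boundary, zero extension outside $\Omega(t_k)$ defines a function in $W_0^{1,p}(Q_0)$; adding back $\psi(t,\cdot)\in W_0^{1,p}(Q_0)$ by \eqref{psi1case1} one obtains
\[
\tilde u^\Delta(t,\cdot)-\psi(t,\cdot)\in W_0^{1,p}(Q_0),
\]
and hence $\tilde u^\Delta(t,\cdot)\in W^{1,p}(Q_0)$, with weak gradient equal to $\nabla u^k$ on $\Omega(t_k)$ and $\nabla\psi$ on $Q_0\setminus\Omega(t_k)$ (the trace-matching ensures that no singular measure sits on $\partial\Omega(t_k)$).

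Next I would estimate the two pieces of $\|\nabla\tilde u^\Delta\|_{L^p(Q_T)}^p$ separately, writing
\[
\|\nabla\tilde u^\Delta\|_{L^p(Q_T)}^p = \sum_{k=0}^{N-1}\int_{t_k}^{t_{k+1}}\!\int_{\Omega(t_k)}|\nabla u^k|^p\,dxdt + \sum_{k=0}^{N-1}\int_{t_k}^{t_{k+1}}\!\int_{Q_0\setminus\Omega(t_k)}|\nabla\psi|^p\,dxdt.
\]
The first sum is controlled by Lemma \ref{nabla_bound}, and the second is bounded by $\|\nabla\psi\|_{L^p(Q_T)}^p$, which is finite by \eqref{psi1case1}. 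For the $L^p$ norm of $\tilde u^\Delta$ itself, I would combine the $L^\infty$ bound of Lemma \ref{linftybound} with the fact that $\psi\in C(\overline{Q_T})$ is uniformly bounded on the bounded set $Q_T$, yielding $\|\tilde u^\Delta\|_{L^\infty(Q_T)}\le C$ and hence $\|\tilde u^\Delta\|_{L^p(Q_T)}\le C|Q_T|^{1/p}$. Adding the three contributions gives the claimed uniform bound, with a constant $C$ depending only on $\widetilde\Omega$, $\psi$ and the structural constants in Assumption \ref{ass3}.

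The weak relative compactness is then immediate: since $1<p<\infty$, the space $L^p(0,T;W^{1,p}(Q_0))$ is reflexive, so a bounded sequence admits a weakly convergent subsequence. There is no genuine obstacle here; the only subtle point is the first step, namely the verification that gluing $u^k$ with $\psi$ across $\partial\Omega(t_k)$ produces no jump distributional contribution to $\nabla\tilde u^\Delta$. This is purely a consequence of the Dirichlet condition $u^k=\psi$ on $\partial\Omega(t_k)$ built into the construction and of the Lipschitz regularity of $\Omega(t_k)$ secured by the selection of the time nodes.
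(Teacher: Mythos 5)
Your argument is correct and is exactly the (unwritten) content behind the paper's one-line proof: the paper simply asserts that the bound follows from the definition of $\tilde u^\Delta$, the assumptions on $\psi$, and Lemma \ref{nabla_bound}, and your write-up supplies precisely those details --- the zero-extension of $u^k-\psi\in W_0^{1,p}(\Omega(t_k))$ to justify $\tilde u^\Delta(t,\cdot)\in W^{1,p}(Q_0)$, the splitting of the gradient norm into the $\Omega(t_k)$ and $Q_0\setminus\Omega(t_k)$ pieces, the $L^\infty$ bound from Lemma \ref{linftybound}, and reflexivity for the compactness. No gaps.
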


\subsection{Time compactness of $\tilde u^\Delta$}\label{sect:compactness}

We now show a stronger compactness property of $u^\Delta$. 
For this aim, we need the following result, proved in 
\cite{simon1986compact}.

\begin{theorem}
\label{thSimon}
Let $X,B,Y$ be three Banach spaces such that $X\subset B\subset Y$. Assume that $X$ is compactly embedded in $B$ and
\begin{equation}\label{compactS1}
\hbox{\rm $F$ is a bounded set in $L^1(0,T;X)$,}
\end{equation}
\begin{equation}\label{compactS2}
\hbox{\rm $\Vert  \tau_h f - f\Vert_{L^1(0,T-h;Y)}\to 0$ as $h\to 0$, uniformly for $f\in F$,}
\end{equation}
where $(\tau_hf)(t)=f(t+h)$ for $h>0$.
Then $F$ is relatively compact in $L^1(0,T;B)$. 
\end{theorem}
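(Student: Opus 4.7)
My plan is to follow J.~Simon's approach, which combines Ehrling's interpolation inequality with a time-discretization argument to reduce relative compactness in $L^1(0,T;B)$ to a finite-dimensional compactness problem in $B$. Throughout, let $M:=\sup_{f\in F}\|f\|_{L^1(0,T;X)}$, finite by (\ref{compactS1}).

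First, I would establish Ehrling's interpolation inequality, a direct consequence of the compactness of $X\hookrightarrow B$ (contradiction argument): for every $\eta>0$ there exists $C_\eta>0$ such that
$$\|v\|_B \le \eta\,\|v\|_X + C_\eta\,\|v\|_Y \qquad \forall\, v\in X.$$
Applying this pointwise to $\tau_h f - f$ and integrating over $(0,T-h)$, together with the triangle-inequality bound $\|\tau_h f-f\|_{L^1(0,T-h;X)}\le 2M$ from (\ref{compactS1}) and the hypothesis (\ref{compactS2}), one obtains, by first choosing $\eta$ small and then $h$ small, the equicontinuity of translates in $B$:
$$\sup_{f\in F}\,\|\tau_h f - f\|_{L^1(0,T-h;B)}\longrightarrow 0 \qquad\text{as } h\to 0^+.$$

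Second, I would approximate each $f\in F$ by a step function. Fix $N\in\mathbb{N}$, set $h=T/N$, $t_i=ih$, and define
$$f_N(t) := \frac{1}{h}\int_{t_i}^{t_{i+1}} f(s)\,ds \qquad \text{for } t\in[t_i,t_{i+1}).$$
A Fubini-type computation with change of variable $r=s-t$ gives
$$\|f_N - f\|_{L^1(0,T;B)} \le \frac{2}{h}\int_0^h \|\tau_r f - f\|_{L^1(0,T-r;B)}\,dr,$$
so the equicontinuity established above yields $f_N\to f$ in $L^1(0,T;B)$ \emph{uniformly} in $f\in F$.

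Third, for each fixed $N$ the family $\{f_N:f\in F\}$ is relatively compact in $L^1(0,T;B)$. Indeed, each such step function is encoded by the tuple $(a_0,\ldots,a_{N-1})\in X^N$ with $a_i=\frac{1}{h}\int_{t_i}^{t_{i+1}} f(s)\,ds$, and by (\ref{compactS1}) we have $\|a_i\|_X \le M/h$. Since $X\hookrightarrow B$ compactly, each coordinate lies in a fixed relatively compact subset of $B$, and the encoding map $(a_0,\ldots,a_{N-1})\mapsto f_N$ is continuous from $B^N$ to $L^1(0,T;B)$; hence $\{f_N:f\in F\}$ is relatively compact in $L^1(0,T;B)$. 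Since $F$ is approximated uniformly by these relatively compact families, it is totally bounded in the complete metric space $L^1(0,T;B)$ and therefore relatively compact.

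The most delicate point is the first step: the parameters $\eta$ (absorbing the $X$-norm piece of Ehrling into a small multiple of the uniform $L^1(0,T;X)$-bound) and $h$ (killing the $Y$-norm piece) must be chosen uniformly in $f\in F$, and it is precisely the combination of (\ref{compactS1}) with (\ref{compactS2}) that allows this. Everything else reduces to elementary bookkeeping with step functions and the compactness of $X\hookrightarrow B$.
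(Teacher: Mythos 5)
The paper does not actually prove this theorem---it is quoted as an external result from Simon \cite{simon1986compact}---so there is no in-paper argument to compare against; your outline is the standard proof of that result (Ehrling's interpolation inequality, valid because the embeddings $X\hookrightarrow B\hookrightarrow Y$ are continuous with $X\hookrightarrow B$ compact, upgrades the $Y$-equicontinuity of translates to $B$-equicontinuity, and then piecewise time-averaging plus compactness of the finitely many averages in $B$ gives total boundedness), which is essentially Simon's own argument. The steps all check out, including the Fubini estimate for $\|f_N-f\|_{L^1(0,T;B)}$ and the uniform-in-$f$ choice of first $\eta$ and then $h$.
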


Let
$$\psi^\Delta(t,x) : =  \sum_{k=0}^{N-1} \chi_{[t_k,t_{k+1})}(t) \psi(t,x) 
\chi_{Q_0\setminus \Omega(t_k)}(x)\,=\,\psi(t,x)\chi_{Q_T\setminus \Omega^\Delta}(t,x),
$$
so that we have
$\tilde u^\Delta(t,x)  = u^\Delta(t,x)  + \psi^\Delta(t,x)$.
\begin{lemma}
\label{duality_estimate}
Let $0<k\le N$ be fixed. Then $u^k_t (t)\chi_{\Omega(t_k)}\in L^{p'}(t_k,t_{k+1},W^{-1,p'}(\Omega(t_k)))$ and the following estimate holds:
$$
\|�u^k_t (t)\chi_{\Omega(t_k)}\|_{L^{p'}(t_k,t_{k+1},W^{-1,p'}(\Omega(t_k)))} \le c \Vert u^\Delta \Vert_{L^p(t_k,t_{k+1},W^{1,p}(\Omega(t_k)))}^{p-1} 
+ \Vert b\Vert_{L^p(t_k,t_{k+1},L^{p'}(\Omega(t_k)))}.
$$
\end{lemma}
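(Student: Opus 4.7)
The strategy is to use the equation $u^k_t = \mathrm{div}\,A(t_k,x,u^k,\nabla u^k)$ from \eqref{model2} in order to rewrite the time derivative as a spatial divergence, and then estimate the resulting expression through the growth bound \eqref{L1} and duality.

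First, I would fix $\phi \in L^p(t_k,t_{k+1};W_0^{1,p}(\Omega(t_k)))$ as a test function. Since $u^k$ is a weak solution on the cylindrical domain $(t_k,t_{k+1})\times \Omega(t_k)$ in the sense of Definition~\ref{d2.1}, and since $\phi$ has zero trace on $\partial \Omega(t_k)$, integration by parts (in space) yields
$$
\int_{t_k}^{t_{k+1}} \langle u^k_t,\phi\rangle_{W^{-1,p'}(\Omega(t_k)),W_0^{1,p}(\Omega(t_k))}\,dt
\,=\, -\int_{t_k}^{t_{k+1}}\!\!\!\int_{\Omega(t_k)} A(t_k,x,u^k,\nabla u^k)\cdot\nabla\phi\,dx\,dt,
$$
with no boundary contribution. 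This identifies $u^k_t\chi_{\Omega(t_k)}$ as the element of $L^{p'}(t_k,t_{k+1};W^{-1,p'}(\Omega(t_k)))$ represented by $\mathrm{div}\, A(t_k,x,u^k,\nabla u^k)$.

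Next I would bound the right-hand side by duality. By H\"older's inequality in space,
$$
\left|\int_{\Omega(t_k)}A\cdot\nabla\phi\,dx\right|
\,\le\,\|A(t_k,\cdot,u^k,\nabla u^k)\|_{L^{p'}(\Omega(t_k))}\,\|\nabla\phi\|_{L^p(\Omega(t_k))},
$$
and from the growth condition \eqref{L1}, together with the elementary inequality $(a+b)^{p'}\le 2^{p'-1}(a^{p'}+b^{p'})$, one gets
$$
\|A(t_k,\cdot,u^k,\nabla u^k)\|_{L^{p'}(\Omega(t_k))}
\,\le\, c\,\|\nabla u^k\|_{L^p(\Omega(t_k))}^{p-1}+\|b(t,\cdot)\|_{L^{p'}(\Omega(t_k))},
$$
(after taking $p'$-th roots and using subadditivity of $s\mapsto s^{1/p'}$). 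Taking the supremum over $\phi$ with $\|\nabla\phi\|_{L^p(\Omega(t_k))}\le 1$ delivers a pointwise-in-time estimate on $\|u^k_t\chi_{\Omega(t_k)}\|_{W^{-1,p'}(\Omega(t_k))}$.

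Finally I would raise this pointwise bound to the $p'$-th power, integrate over $(t_k,t_{k+1})$, and use that $(p-1)p'=p$ to conclude
$$
\|u^k_t\chi_{\Omega(t_k)}\|_{L^{p'}(t_k,t_{k+1};W^{-1,p'}(\Omega(t_k)))}^{p'}
\,\le\, C\int_{t_k}^{t_{k+1}}\!\!\bigl(\|\nabla u^k\|_{L^p(\Omega(t_k))}^{p}+\|b(t,\cdot)\|_{L^{p'}(\Omega(t_k))}^{p'}\bigr)\,dt,
$$
which, after taking the $p'$-th root, yields the stated inequality (recalling that $u^\Delta = u^k$ on $[t_k,t_{k+1})\times\Omega(t_k)$). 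The main (minor) obstacle is essentially book-keeping: one must make sure that the integration by parts is justified and that the $\chi_{\Omega(t_k)}$ truncation is the correct way to read $u^k_t$ as a distribution on the cylinder $(t_k,t_{k+1})\times\Omega(t_k)$, in spite of the nonzero lateral boundary datum $\psi$. This is handled by the fact that the test functions vanish on $\partial\Omega(t_k)$, so the lateral boundary plays no role in the estimate.
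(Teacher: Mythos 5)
Your proposal is correct and follows essentially the same route as the paper: testing $u^k_t$ against $\phi\in L^p(t_k,t_{k+1};W_0^{1,p}(\Omega(t_k)))$, using the equation to replace $u^k_t$ by $\mathrm{div}\,A(t_k,x,u^k,\nabla u^k)$, and invoking the growth bound \eqref{L1} together with H\"older's inequality; the only cosmetic difference is that you perform the duality pointwise in time and then integrate, while the paper works directly with the Bochner-space pairing. Both yield the stated estimate.
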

\begin{proof}
We show the estimate by duality. Define $B_k:=L^{p}(t_k,t_{k+1},W_0^{1,p}(\Omega(t_k)))$ and let $\phi \in B_k$. We compute
$$
\langle u^k_t (t)\chi_{\Omega(t_k)},\phi\rangle_{B_k'-B_k} = - \int_{t_k}^{t_{k+1}} \int_{\Omega(t_k)} {A}(t_k,x,u^k,\nabla u^k(t))\cdot\nabla \phi\, dxdt.
$$
Hence using \eqref{L1}
$$
\left|\langle u^k_t \chi_{\Omega(t_k)},\phi\rangle_{B_k'-B_k} \right| \leq
\int_{t_k}^{t_{k+1}} \int_{\Omega(t_k)} ( c\vert \nabla u^\Delta(t)\vert^{p-1} 
+  {b}(t,x))|\nabla \phi|  dxdt
$$
$$
\leq \int_{t_k}^{t_{k+1}} \left( c \Vert u^\Delta(t) \Vert_{W^{1,p}(\Omega(t_k))}^{p-1} 
+ \Vert b(t)\Vert_{L^{p'}(\Omega(t_k))} \right) \Vert \phi\Vert_{W_0^{1,p}(\Omega(t_k))}\, dt.
$$
The result follows.
\end{proof}
\begin{lemma}\label{comp_time}
The sequence $\{\tilde u^\Delta\}$ is relatively compact {in $L_{loc}^1(\widetilde\Omega)$.}
\end{lemma}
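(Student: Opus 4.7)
My plan is to apply Simon's compactness theorem (Theorem \ref{thSimon}) on space-time cylinders compactly contained in $\widetilde\Omega$. Since $\widetilde\Omega$ is open, it suffices to establish compactness of $\{\tilde u^\Delta\}$ in $L^1(C)$ for every set $C=(a,b)\times \omega$ with $\omega\subset\R^d$ a bounded open set with Lipschitz boundary and $\overline C\subset\widetilde\Omega$. By Lemma \ref{basic_fact} the sets $\Omega^\Delta$ converge to $\widetilde\Omega$ in the Hausdorff sense, so that for $\Delta$ small enough (depending on $C$) one has $\overline C\subset\Omega^\Delta$, hence $\tilde u^\Delta= u^\Delta$ on $C$; the problem thus reduces to compactness of $u^\Delta$ in $L^1(C)$.

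The key observation is that, on such a cylinder, $u^\Delta$ is time-continuous across each partition point $t_k$. Hausdorff convergence will imply that, for $\Delta$ small, every grid point $t_k$ close to $[a,b]$ satisfies $\omega\subset\Omega(t_k)\cap\Omega(t_{k-1})$, so the iterative initial datum in \eqref{model2} gives $u^k(t_k,\cdot)=\lim_{t\to t_k-}u^{k-1}(t,\cdot)$ on $\omega$. Combined with \eqref{cont}, this yields $u^\Delta\in C([a,b];L^2(\omega))$ and, crucially, the distributional time derivative of $u^\Delta$ on $(a,b)\times\omega$ coincides with the piecewise classical derivative of each slice solution.

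I would then apply Theorem \ref{thSimon} with $X=W^{1,p}(\omega)$, $B=L^p(\omega)$, and $Y$ chosen large enough that both $L^p(\omega)\hookrightarrow Y$ and $W^{-1,p'}(\omega)\hookrightarrow Y$ hold (e.g.\ the dual of $W_0^{m,p}(\omega)$ for $m$ sufficiently large). Rellich--Kondrachov gives $X\hookrightarrow\hookrightarrow B$, condition \eqref{compactS1} follows from Corollary \ref{cor1p}, and for condition \eqref{compactS2} one restricts the test functions in Lemma \ref{duality_estimate} to those supported in $\omega\subset\Omega(t_k)$ and sums the resulting bounds over the partition, producing a uniform estimate of $u^\Delta_t$ in $L^{p'}(a,b;W^{-1,p'}(\omega))\hookrightarrow L^{p'}(a,b;Y)$. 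By Fubini,
\[
\int_a^{b-h}\|u^\Delta(\cdot+h)-u^\Delta(\cdot)\|_Y\,dt \,\le\, h\,\|u^\Delta_t\|_{L^1(a,b;Y)}\,\le\, Ch,
\]
which is uniform in $\Delta$ and tends to $0$ as $h\to 0$. Simon's theorem then gives relative compactness of $\{u^\Delta\}$ in $L^1((a,b);L^p(\omega))\hookrightarrow L^1(C)$, as desired.

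The main obstacle will be justifying the time continuity across partition points: \emph{a priori}, $u^\Delta$ may jump at each $t_k$ because the initial datum is ``refreshed'' to $\psi(t_k,\cdot)$ on the new piece $\Omega(t_k)\setminus\Omega(t_{k-1})$, and because $u^\Delta$ is extended by zero outside $\Omega(t_k)$ on $[t_k,t_{k+1})$, which also produces jumps on $\Omega(t_{k-1})\setminus\Omega(t_k)$. It is precisely the compact containment $\overline C\subset\widetilde\Omega$, together with Hausdorff convergence $\Omega^\Delta\to\widetilde\Omega$, that confines these jumps to regions not meeting the fixed spatial slice $\omega$ once $\Delta$ is small, thereby allowing the slice-by-slice estimate of Lemma \ref{duality_estimate} to be patched into a global bound on $(a,b)$. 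Dealing with the generality of BV-type domain evolution via this spatial localization is the crux of the argument.
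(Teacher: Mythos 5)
Your proposal is correct and follows essentially the same route as the paper: Simon's compactness theorem applied on space--time cylinders compactly contained in $\widetilde\Omega$, with condition \eqref{compactS1} from the a priori bounds, condition \eqref{compactS2} from summing the slice-wise duality estimate of Lemma \ref{duality_estimate} (the Hausdorff convergence of $\Omega^\Delta$ confining the jumps of $u^\Delta$ away from the cylinder, so that the distributional time derivative has no singular part there), and a covering/diagonal argument to pass to $L^1_{loc}(\widetilde\Omega)$. The only differences are cosmetic choices of the intermediate spaces ($B=L^p(\omega)$ and a large negative Sobolev space for $Y$, versus $B=L^1(K)$ and $Y=W^{-1,p'}(K)+L^1(K)$ in the paper), which do not affect the argument.
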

\begin{proof}
We consider a cylinder $C:=[t_1,t_2]\times K\subset \subset \widetilde\Omega$. We want to apply Theorem \ref{thSimon} with  
$f=\tilde u^\Delta|_C=u^\Delta|_C$, 
$X=W^{1,p}(K),\, B=L^1(K)$ and $Y=W^{-1,p'}(K)+L^1(K)$.
Here $Y$ is a Banach space equipped with the norm
$$\|y\|_Y := \inf\{ \|y_1\|_{W^{-1,p'}(K)} + \|y_2\|_{L^1(K)}:\, y_1+y_2=y\}.$$
Then $X\subset B \subset Y$ and $X$ is compactly embedded in $B$.

Notice that, since $C\subset\subset\widetilde\Omega$, we have
$$
\tilde u^\Delta_t|_C =u^\Delta_t|_C= \sum_{k=0}^{N-1} 
\chi_{[t_k,t_{k+1})}(t) u^k_t(t,x)\chi_{\Omega(t_k)}(x)|_C
\qquad \text{
for $N$ large enough.} 
$$

Estimate \eqref{compactS1} directly follows from Lemma \ref{linftybound}.
In order to prove \eqref{compactS2},
we notice that (with a slight abuse of notation) 
$$
\tilde u^\Delta(t+h) - \tilde u^\Delta(t) =\int_t^{t+h} \tilde u^\Delta_t(s) \, ds = \int_t^{t+h} \sum_{k=0}^{N-1} \chi_{[t_k,t_{k+1})}(t) u^k_t(s,x)\chi_{\Omega(t_k)}(x) \, ds:=u_1^\Delta(t,h).
 $$ 

We claim  that
\begin{equation}\label{compactPS1}
\int_{t_1}^{t_2-h} \Vert u_1^\Delta(t,h) \Vert_{W^{-1,p'}(K)} \, dt \to 0 
\qquad \hbox{\rm as $h\to 0+$}
\end{equation}
uniformly in $N$; this would imply \eqref{compactS2}. To prove it we sum up all the estimates coming from Lemma \ref{duality_estimate} for different values of $k$ 
in order to cover the cylinder $C$. We obtain that 
there exist $\tilde{C}>0$ independent of $N$ and $t \in [t_1,t_2]$ 
such that $\|u_1(t,h)^\Delta\|_Y \le \tilde{C} h$, which implies \eqref{compactPS1}. Hence $\tilde u^\Delta$ is strongly compact in $L^1(C)$. Now any compact set in $\widetilde\Omega$ can be covered by a finite number of open cylinders. To conclude we take a countable sequence of compact sets embedded in $\widetilde\Omega$ whose increasing union exhausts $\widetilde\Omega$ and apply a diagonal procedure.
\end{proof}
\begin{corollary}
\label{cor}
There exists a subsequence of $\{\tilde u^\Delta\}$
which converges strongly in $L^1(Q_T)$.
\end{corollary}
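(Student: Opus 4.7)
The plan is to combine Lemma \ref{comp_time} with the uniform $L^\infty$ bound of Lemma \ref{linftybound} and the Hausdorff convergence of Lemma \ref{basic_fact}, so as to upgrade the local compactness in $\widetilde\Omega$ to global compactness in $Q_T$.

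First, by Lemma \ref{comp_time}, I extract a subsequence (not relabeled) so that $\tilde u^\Delta \to \tilde u$ in $L^1_{loc}(\widetilde\Omega)$ for some $\tilde u \in L^1_{loc}(\widetilde\Omega)$, and, passing to a further subsequence via a standard diagonal procedure over an exhausting family of compact sets, so that $\tilde u^\Delta \to \tilde u$ almost everywhere on $\widetilde\Omega$. Lemma \ref{linftybound} gives $\|u^\Delta\|_{L^\infty(\Omega^\Delta)}\le \bar C$, and $\psi\in L^\infty(Q_T)$ by Assumption \ref{ass4}, so the definition \eqref{p7def} of $\tilde u^\Delta$ yields a uniform bound $\|\tilde u^\Delta\|_{L^\infty(Q_T)}\le M$ for some $M>0$. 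Dominated convergence then turns the pointwise a.e. convergence into strong convergence $\tilde u^\Delta\to \tilde u$ in $L^1(\widetilde\Omega)$.

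Next I address the complement $Q_T\setminus\widetilde\Omega$. From \eqref{p7def} one reads off the pointwise identity $\tilde u^\Delta - \psi = (u^\Delta-\psi)\,\chi_{\Omega^\Delta}$ a.e.\ in $Q_T$, with the convention that $u^\Delta$ is extended by zero outside $\Omega^\Delta$. Hence on $Q_T\setminus\widetilde\Omega$ we have
$$
|\tilde u^\Delta - \psi|\ \le\ 2M\,\chi_{\Omega^\Delta\setminus\widetilde\Omega}.
$$
Lemma \ref{basic_fact} gives $|\Omega^\Delta\,\triangle\,\widetilde\Omega|\to 0$, and in particular $|\Omega^\Delta\setminus\widetilde\Omega|\to 0$, from which $\tilde u^\Delta\to \psi$ in $L^1(Q_T\setminus\widetilde\Omega)$. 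Setting $\hat u := \tilde u$ on $\widetilde\Omega$ and $\hat u := \psi$ on $Q_T\setminus\widetilde\Omega$, the two convergences combine to give $\tilde u^\Delta \to \hat u$ strongly in $L^1(Q_T)$ along the extracted subsequence.

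The real work has already been done in Lemma \ref{comp_time}; what remains is routine measure-theoretic bookkeeping. The only mild subtlety is the upgrade from $L^1_{loc}(\widetilde\Omega)$ to $L^1(\widetilde\Omega)$, where the uniform $L^\infty$ bound is essential to prevent mass from being lost near $\partial\widetilde\Omega$ (and symmetrically, the bound together with $|\Omega^\Delta\,\triangle\,\widetilde\Omega|\to 0$ is what rules out concentration on the thin layer between $\Omega^\Delta$ and $\widetilde\Omega$).
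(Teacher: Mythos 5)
Your argument is correct and is essentially the paper's own proof, just written out in full: the paper likewise combines the local compactness of Lemma \ref{comp_time} with the uniform bound of Lemma \ref{linftybound} via dominated convergence on $\widetilde\Omega$, and handles $Q_T\setminus\widetilde\Omega$ by noting that $\tilde u^\Delta=\psi$ outside $\Omega^\Delta$ together with Lemma \ref{basic_fact}. No issues.
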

\begin{proof}
We can combine Lemma \ref{comp_time} with the uniform bound provided by Lemma \ref{linftybound} to use Lebesgue's dominated convergence theorem. Note that
the functions are constantly equal to $\psi$ outside $\Omega^\Delta$ and that Lemma \ref{basic_fact} applies.
\end{proof}
%

\section{Existence of solutions}\label{sect:limit}

In this section we prove the existence of weak solutions of \eqref{model1}.

\subsection{Convergence of the approximate solutions}

\begin{lemma}\label{u_conv}
There are functions $\tilde u, u$ such that the following statements hold 
(up to extracting a subsequence) for $N \to \infty$:
\begin{enumerate}
\item[1)] $\tilde u^\Delta \rightharpoonup \tilde u$ weakly in $L^p(0,T,W^{1,p}(Q_0)) \cap L^\infty(Q_T)$, 
\item[2)] $\tilde u^\Delta \rightarrow \tilde u$ in $L^1(Q_T)$
and a.e. in $Q_T$,
\item[3)] $\psi^\Delta \rightarrow \psi \chi_{Q_T \backslash \widetilde \Omega}$ in $L^1(Q_T)$ and a.e. in $Q_T$,
\item[4)] $u^\Delta \rightarrow u$ in $L^1(\widetilde\Omega)$,
\item[5)] $\tilde u = u + \psi \chi_{Q_T \backslash \widetilde \Omega}$ and $u = \tilde  u \chi_{\widetilde \Omega}$.
\end{enumerate}

\end{lemma}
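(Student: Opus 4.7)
The plan is to combine the uniform estimates from Section 3 with a straightforward diagonal extraction, and then to exploit the identity $\tilde u^\Delta = u^\Delta + \psi^\Delta$ together with Lemma \ref{basic_fact} to identify the various limits.

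First, I would address items (1) and (2). Corollary \ref{cor1p} gives a uniform bound on $\{\tilde u^\Delta\}$ in $L^p(0,T;W^{1,p}(Q_0))$, while Lemma \ref{linftybound} (applied on each slice $[t_k,t_{k+1})$) combined with the boundedness of $\psi$ on $Q_T$ gives a uniform bound on $\{\tilde u^\Delta\}$ in $L^\infty(Q_T)$. Reflexivity of $L^p(0,T;W^{1,p}(Q_0))$ and the Banach--Alaoglu theorem for $L^\infty(Q_T)\cong (L^1(Q_T))^*$ yield, up to a subsequence, a weak (respectively, weak-$*$) limit $\tilde u$; both limits coincide a.e. by testing against compactly supported test functions. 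Corollary \ref{cor} already provides a further subsequence along which $\tilde u^\Delta\to\tilde u$ strongly in $L^1(Q_T)$, and a further extraction gives a.e.\ convergence, establishing (2).

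For (3), I would write $\psi^\Delta(t,x) = \psi(t,x)\bigl(1-\chi_{\Omega^\Delta}(t,x)\bigr)$. Lemma \ref{basic_fact} gives $\chi_{\Omega^\Delta}\to\chi_{\widetilde\Omega}$ in $L^1(Q_T)$ and, along a further subsequence, a.e. in $Q_T$. Since $\psi$ is bounded on $Q_T$ by Assumption \ref{ass4}, dominated convergence yields $\psi^\Delta\to \psi(1-\chi_{\widetilde\Omega}) = \psi\,\chi_{Q_T\setminus\widetilde\Omega}$ both in $L^1(Q_T)$ and a.e.

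For (4) and (5), I would use the pointwise identity $\tilde u^\Delta = u^\Delta + \psi^\Delta$, which follows at once from \eqref{udelta} and \eqref{p7def} together with the definition of $\psi^\Delta$ (the supports of $u^\Delta$ and $\psi^\Delta$ are disjoint). Combining (2) and (3), the sequence $u^\Delta = \tilde u^\Delta - \psi^\Delta$ converges in $L^1(Q_T)$ to $\tilde u - \psi\,\chi_{Q_T\setminus\widetilde\Omega}$. Since $u^\Delta$ is supported in $\Omega^\Delta$ and $\chi_{\Omega^\Delta}\to\chi_{\widetilde\Omega}$ in $L^1(Q_T)$, the $L^1$-limit of $u^\Delta$ must vanish a.e. on $Q_T\setminus\widetilde\Omega$; hence $\tilde u = \psi$ a.e. on $Q_T\setminus\widetilde\Omega$. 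Defining $u := \tilde u\,\chi_{\widetilde\Omega}$, this both gives (5) and, by restriction of the $L^1(Q_T)$ convergence above, yields $u^\Delta\to u$ in $L^1(\widetilde\Omega)$, which is (4).

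The only mildly delicate point is the identification $\tilde u = \psi$ a.e. on $Q_T\setminus\widetilde\Omega$ used to close (5); everything else is a routine diagonal extraction. This step is however immediate once one observes that the $L^1$-limit of a sequence of functions whose supports shrink to $\widetilde\Omega$ (in the sense of Lemma \ref{basic_fact}) must itself be supported in $\widetilde\Omega$, so no genuine obstacle arises.
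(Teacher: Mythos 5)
Your proposal is correct and follows essentially the same route as the paper: the same uniform bounds (Lemma \ref{linftybound}, Corollary \ref{cor1p}), the same use of Corollary \ref{cor} for strong $L^1$ compactness, the same estimate $\|\psi^\Delta-\psi\chi_{Q_T\setminus\widetilde\Omega}\|_{L^1}\le\|\psi\|_{L^\infty}\|\chi_{\Omega^\Delta}-\chi_{\widetilde\Omega}\|_{L^1}$ via Lemma \ref{basic_fact}, and the same identification of $u$ through the decomposition $\tilde u^\Delta=u^\Delta+\psi^\Delta$ and the support argument. No substantive differences.
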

\begin{proof}
The first statement
follows from Lemma \ref{linftybound} and
Corollary \ref{cor1p}. 
The second statement follows from Corollary \ref{cor}. 
To prove the third statement we write
$$
 \|\psi^\Delta- \psi\chi_{Q_T\backslash \widetilde \Omega}\|_{L^1(Q_T)}\le \| \psi\|_{L^\infty(Q_T)} \|\chi_{Q_T\backslash \Omega^\Delta} - \chi_{Q_T\backslash \widetilde \Omega}\|_{L^1(Q_T)} \rightarrow 0
$$
as $N \to \infty$, thanks to Lemma \ref{basic_fact}. It follows that
$$
u^\Delta=\tilde u^\Delta-\psi^\Delta \rightarrow u:=\tilde u - \psi\chi_{Q_T\backslash \widetilde \Omega} \quad \mbox{in}\ L^1(\widetilde\Omega).
$$
Since $\Omega^\Delta \to \widetilde{\Omega}$ by Lemma \ref{basic_fact}, 
we get that $\tilde u^\Delta \to \psi$ a.e. in $Q_T\backslash \widetilde\Omega$, so that $u$ is supported on $\widetilde\Omega$. 
\end{proof} 

Recalling Lemma \ref{linftybound} it follows that, 
up to a subsequence,
$\tilde u^\Delta\to \tilde u$ in $L^p(Q_T)$ and
$u^\Delta\to u$ in $L^p(\widetilde\Omega)$,
for all $1\le p<\infty$.

We now discuss the convergence of the time derivatives.  
\begin{lemma}
\label{45}
There exists $\widetilde\Lambda \in \mathcal{D}'(Q_T)$ such that, up to extraction of a subsequence, $\tilde u^\Delta_t \rightharpoonup \widetilde\Lambda$ in $\mathcal{D}'(Q_T)$. In fact, $\widetilde\Lambda$ agrees as a distribution over $Q_T$ with the time derivative (in distributional sense) of the function $\tilde u$ defined in Lemma \ref{u_conv}. Moreover, given any cylinder $C:=(t_a,t_b)\times K \subset \subset \widetilde\Omega$, there holds that $\widetilde\Lambda_{|C} \in L^{p'}(0,T;W^{-1,p'}(K))$ and $\tilde u^\Delta_t|_C \rightharpoonup \widetilde\Lambda_{|C}$ in $L^{p'}(0,T;W^{-1,p'}(K))$. 
\end{lemma}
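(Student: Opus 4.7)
My plan is to take advantage of the fact that part of the statement is automatic once the $L^1$ convergence $\tilde u^\Delta\to\tilde u$ is known, and then upgrade the convergence locally by a duality/reflexivity argument based on Lemma \ref{duality_estimate}.

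First I would simply \emph{define} $\widetilde\Lambda:=\tilde u_t$ in the sense of distributions on $Q_T$, where $\tilde u$ is the limit produced by Lemma \ref{u_conv}. The distributional convergence $\tilde u^\Delta_t\rightharpoonup\widetilde\Lambda$ in $\mathcal D'(Q_T)$ is then essentially free: for any $\varphi\in C_c^\infty(Q_T)$ one has
$$
\langle \tilde u^\Delta_t,\varphi\rangle = -\int_{Q_T}\tilde u^\Delta\,\varphi_t\,dxdt \longrightarrow -\int_{Q_T}\tilde u\,\varphi_t\,dxdt = \langle \widetilde\Lambda,\varphi\rangle,
$$
because of the $L^1(Q_T)$ convergence of $\tilde u^\Delta$ provided by Lemma \ref{u_conv}. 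This also identifies the limit (no subsequence extraction is needed for this part).

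Next I would fix a cylinder $C=(t_a,t_b)\times K\subset\subset\widetilde\Omega$ and verify that, for $\Delta$ sufficiently small, $\overline K\subset \Omega(t_k)$ for every index $k$ with $[t_k,t_{k+1})\cap(t_a,t_b)\ne\emptyset$. This is a consequence of the Hausdorff convergence $\Omega^\Delta\to\widetilde\Omega$ in Lemma \ref{basic_fact} together with the compact containment of $C$. Once this holds, $\tilde u^\Delta_t|_C$ coincides on each sub-interval with the corresponding $u^k_t\chi_{\Omega(t_k)}$ restricted to $K$. By duality, extension by zero gives the elementary inequality
$$
\|u^k_t\|_{W^{-1,p'}(K)} \leq \|u^k_t\chi_{\Omega(t_k)}\|_{W^{-1,p'}(\Omega(t_k))},
$$
so, integrating in time and raising to the $p'$-th power,
$$
\int_{t_a}^{t_b}\|\tilde u^\Delta_t(t)\|_{W^{-1,p'}(K)}^{p'}dt \leq \sum_{k}\|u^k_t\chi_{\Omega(t_k)}\|_{L^{p'}(I_k;W^{-1,p'}(\Omega(t_k)))}^{p'}.
$$
Applying Lemma \ref{duality_estimate} to each term, using $(p-1)p'=p$, and then summing in $k$, I obtain a bound of the form
$$
\int_{t_a}^{t_b}\|\tilde u^\Delta_t(t)\|_{W^{-1,p'}(K)}^{p'}dt \leq C\bigl(c^{p'}\|\nabla u^\Delta\|_{L^p(\Omega^\Delta)}^{p} + \|b\|_{L^{p'}(Q_T)}^{p'}\bigr),
$$
which is uniform in $\Delta$ thanks to the global estimate of Lemma \ref{nabla_bound}.

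Finally, reflexivity of $L^{p'}((t_a,t_b);W^{-1,p'}(K))$ yields a subsequence (depending a priori on $C$) along which $\tilde u^\Delta_t|_C$ converges weakly to some limit, which must coincide with $\widetilde\Lambda|_C$ since it already coincides with it in $\mathcal D'(C)$ by the first step. To make the subsequence independent of $C$, I would exhaust $\widetilde\Omega$ by a countable increasing family of cylinders $C_j\subset\subset\widetilde\Omega$ and carry out a standard diagonal extraction. The main technical point — and the step requiring the most care — is the bookkeeping of the sum over $k$ that converts the local-in-time estimate of Lemma \ref{duality_estimate} into a global bound on $(t_a,t_b)$ uniformly in $N$; everything else is a straightforward combination of distributional convergence, reflexivity, and the previously established estimates.
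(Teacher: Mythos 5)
Your proposal is correct and follows essentially the same route as the paper: the distributional convergence and identification of $\widetilde\Lambda$ with $\tilde u_t$ come from testing against $\phi_t$ and using the $L^1(Q_T)$ convergence of $\tilde u^\Delta$, while the local $L^{p'}(W^{-1,p'})$ statement comes from the uniform bounds of Lemma \ref{duality_estimate} on cylinders compactly contained in $\widetilde\Omega$. The paper leaves the second half as a one-line remark, so your explicit bookkeeping of the sum over $k$, the duality/extension-by-zero inequality, and the reflexivity-plus-diagonal extraction is a faithful (and welcome) expansion of the same argument rather than a different one.
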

\begin{proof}
Let us denote by $\langle \cdot ,\cdot \rangle$ the pairing between $\mathcal{D}'(Q_T)$ and $\mathcal{D}(Q_T)$.
Given $\phi \in \mathcal{D}(Q_T)$, we compute 
$$
\langle \tilde u_t^\Delta,\phi \rangle = -\langle \tilde u^\Delta,\phi_t \rangle = -\int_0^T \int_{Q_0}\tilde u^\Delta \phi_t \, dxdt.
$$
We may now use Corollary \ref{cor} to pass to the limit, so that
$$
 -\int_0^T \int_{Q_0}\tilde u \phi_t \, dxdt = \lim_{N\to \infty} \langle \tilde u_t^\Delta,\phi \rangle = \langle \widetilde\Lambda,\phi \rangle
$$
up to a subsequence. This shows the first and second statements.

Our last statement is a consequence of Lemma \ref{duality_estimate}, which provides uniform bounds on the time derivative over cylinders contained in $\widetilde\Omega$ as in Lemma \ref{comp_time}.
\end{proof}
\begin{corollary}
Let $\Sigma_{t_1,t_2}:=[t_1,t_2] \times K$ such that $\Sigma_{t_1,t_2} \cap \overline{\partial \widetilde\Omega} = \emptyset$. Then $\tilde u \in C(t_1,t_2;L^2(K))$.
\end{corollary}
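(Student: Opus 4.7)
The strategy is to combine the two pieces of information we already have on a slightly thicker cylinder, namely the spatial regularity from Lemma \ref{u_conv}(1) and the time-derivative bound from Lemma \ref{45}, and then invoke the interpolation result Lemma \ref{interpol} to get continuity in time with values in $L^2$. Since $\overline{\Sigma_{t_1,t_2}}\cap\overline{\partial\widetilde\Omega}=\emptyset$, by a standard covering argument I can find an open set $K'$ with $K\subset\subset K'$ and $[t_1,t_2]\times\overline{K'}\subset\widetilde\Omega$; without loss of generality $K'$ is a bounded Lipschitz domain. Applying Lemma \ref{45} to the cylinder $(t_1,t_2)\times K'$ gives $\tilde u_t\in L^{p'}(t_1,t_2;W^{-1,p'}(K'))$, while Lemma \ref{u_conv}(1) and $L^\infty$-boundedness yield $\tilde u\in L^p(t_1,t_2;W^{1,p}(K'))\cap L^\infty$.

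The mismatch to be fixed is that $\tilde u$ lives in $W^{1,p}(K')$, whereas the duality identifying $\tilde u_t$ pairs with test functions in $W_0^{1,p}(K')$. I would fix it with a cutoff: pick $\chi\in C_c^\infty(K')$ with $\chi\equiv 1$ on a neighborhood of $\overline K$, and set $w:=\chi\tilde u$. Then $w\in L^p(t_1,t_2;W_0^{1,p}(K'))$, and since $\chi$ is independent of $t$ one has $w_t=\chi\tilde u_t$ in $\mathcal D'((t_1,t_2)\times K')$. Multiplication by $\chi$ is a bounded operator on $W_0^{1,p}(K')$, hence by duality also on $W^{-1,p'}(K')$, so $w_t\in L^{p'}(t_1,t_2;W^{-1,p'}(K'))$.

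Now I am in position to apply Lemma \ref{interpol}. Following the distinction used in the proof of Proposition \ref{Lionspge2}, I take $H:=L^2(K')$ and
\[
V:=\begin{cases} W_0^{1,p}(K') & p\ge 2,\\ W_0^{1,p}(K')\cap L^2(K') & 1<p<2,\end{cases}
\]
so that $V\hookrightarrow H\hookrightarrow V'$ with dense continuous embeddings, and $V'$ contains $W^{-1,p'}(K')$. Since $w\in L^\infty\subset L^p(t_1,t_2;L^2(K'))$, we have $w\in L^p(t_1,t_2;V)$ in both cases, and $w_t\in L^{p'}(t_1,t_2;V')$. Lemma \ref{interpol} then gives $w\in C([t_1,t_2];L^2(K'))$, and restricting to $K$, where $\chi\equiv 1$, one obtains $\tilde u\in C([t_1,t_2];L^2(K))$, as required.

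The only real subtlety in the argument is the cutoff step: one must know that multiplication by the smooth compactly supported $\chi$ commutes with $\partial_t$ as distributions and sends $W^{-1,p'}$ into itself, but both facts are standard since $\chi$ depends only on $x$ and is a pointwise multiplier of $W_0^{1,p}$. Everything else is a routine application of results already established in the paper.
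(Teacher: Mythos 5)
Your proposal is correct and follows essentially the same route as the paper: localize with a cutoff so that the function lands in $W_0^{1,p}$, use Lemma \ref{45} for the time derivative, and conclude via Lemma \ref{interpol}. Your time-independent spatial cutoff on a slightly enlarged cylinder is if anything a bit cleaner than the paper's space-time test function (it avoids the extra $\phi_t\tilde u$ term), and you are more explicit about the $1<p<2$ case, but the argument is the same.
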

\begin{proof}
Let $\phi \in \mathcal{D}(\Sigma_{t_1,t_2})$. By previous considerations, we know that $(\phi \tilde u)_t \in L^{p'}(t_1,t_2;W^{-1,p'}(K))$ and also $(\phi\, \tilde u)(t) \in W_0^{1,p}(K)$ for a.e. $t_1< t< t_2$. Using Lemma \ref{interpol} we deduce that 
$\phi\, \tilde u \in C(t_1,t_2;L^2(K))$.
Being $\phi$ and $K$ arbitrary, the thesis follows.
\end{proof}
\begin{corollary}
\label{important}There holds that 
$\widetilde \Lambda_{|\widetilde\Omega}
 \in\mathcal{V}^*$.
\end{corollary}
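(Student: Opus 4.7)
The goal is to show that $\widetilde\Lambda_{|\widetilde\Omega}$, which by Lemma \ref{45} is a priori only a distribution, extends to a bounded linear functional on $\mathcal{V}$. Since $C_c^1(\widetilde\Omega)$ is dense in $\mathcal{V}$ by definition, it suffices to establish a uniform estimate
$$
|\langle \widetilde\Lambda, \phi\rangle| \leq C\, \|\nabla\phi\|_{L^p(\widetilde\Omega)}
\qquad\text{for all }\phi\in C_c^1(\widetilde\Omega),
$$
with $C$ independent of $\phi$, and then extend by density. By Lemma \ref{45}, $\langle\widetilde\Lambda,\phi\rangle=\lim_{\Delta\to 0}\langle \tilde u^\Delta_t,\phi\rangle$, so the task reduces to bounding the approximating pairings.

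The key computation I would perform is the identity
$$
\langle \tilde u^\Delta_t,\phi\rangle \;=\; -\sum_{k=0}^{N-1}\int_{t_k}^{t_{k+1}}\!\int_{\Omega(t_k)} A(t_k,x,u^k,\nabla u^k)\cdot\nabla\phi\,dx\,dt,
$$
valid for every $\Delta$ small enough that $\mathrm{supp}(\phi)\subset\Omega^\Delta$ (possible by Lemma \ref{basic_fact}). On each slice, Proposition \ref{Lionspge2} gives $u^k_t=\mathrm{div}_x A(t_k,\cdot,u^k,\nabla u^k)$ weakly, so spatial integration by parts produces the above flux terms without boundary contribution on $\partial\Omega(t_k)$ (since $\phi(t,\cdot)$ has compact support inside $\Omega(t_k)$ for $N$ large). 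The boundary terms in time at each interface $t_k$ yield contributions of the form $\int u^k(t_k+)\phi(t_k)\,dx - \int u^{k-1}(t_k-)\phi(t_k)\,dx$, which cancel pairwise between consecutive slices: for $N$ large, $\mathrm{supp}(\phi(t_k,\cdot))\subset\Omega(t_k)\cap\Omega(t_{k-1})$, and on this overlap the iterative initial condition in \eqref{model2} together with the $L^2$-continuity of $u^k$ in time (from Lemma \ref{interpol}) force $u^k(t_k+)=u^{k-1}(t_k-)$; the endpoint contributions at $t=0$ and $t=T$ vanish by compact support of $\phi$ in $\widetilde\Omega$.

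Once the identity is established, H\"older's inequality combined with \eqref{L1} gives
$$
|\langle \tilde u^\Delta_t,\phi\rangle|\;\leq\; \Bigl(c\,\|\nabla u^\Delta\|_{L^p(\Omega^\Delta)}^{p-1} + \|b\|_{L^{p'}(Q_T)}\Bigr)\|\nabla\phi\|_{L^p(\widetilde\Omega)},
$$
and Lemma \ref{nabla_bound} controls the bracketed factor by a constant depending only on $\widetilde\Omega$, $\psi$ and the structural data in Assumption \ref{ass3}. Letting $\Delta\to 0$ and invoking density concludes the proof.

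The main obstacle I anticipate is the telescoping of the interfacial boundary terms at the times $t_k$. This is delicate because $\tilde u^\Delta$ genuinely jumps at $t_k$ on $\Omega(t_k)\triangle\Omega(t_{k-1})$, so a priori $\tilde u^\Delta_t$ contains Dirac masses along these time slices. The argument above rules them out only on $\mathrm{supp}(\phi)$, which must therefore be shown to sit (for $\Delta$ small) inside the set where no such jump occurs; this is where the Hausdorff convergence of $\Omega^\Delta$ to $\widetilde\Omega$ from Lemma \ref{basic_fact} and the compact containment of $\mathrm{supp}(\phi)$ in $\widetilde\Omega$ combine to do the work.
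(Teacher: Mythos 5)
Your proposal is correct and takes essentially the same route as the paper: the paper's proof simply invokes Lemma \ref{duality_estimate} to obtain $|\langle\widetilde\Lambda,\phi\rangle|\le \|\phi\|_{\mathcal V}\bigl(c\|\tilde u\|_{L^p(0,T;W^{1,p}(Q_0))}^{p-1}+\|b\|_{L^{p'}}\bigr)$ for $\phi\in C_c^1(\widetilde\Omega)$ and concludes by density, which is exactly the uniform bound you derive. Your re-derivation of the slice-wise flux identity and your discussion of why the interfacial jump terms at the $t_k$ do not meet $\mathrm{supp}\,\phi$ are precisely the content already established in Lemmas \ref{duality_estimate} and \ref{comp_time}, so there is no gap.
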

\begin{proof}
Let $\phi \in C_c^1(\widetilde\Omega)$. Thanks to Lemma \ref{duality_estimate} we have that 
$$
| \langle \widetilde \Lambda,\phi \rangle_{\mathcal{V}^*-\mathcal{V}} |\le ||\phi||_{\mathcal{V}} \left(c \Vert \tilde u \Vert_{L^p(0,T,W^{1,p}(Q_0))}^{p-1} 
+ \Vert b\Vert_{L^p(0,T,L^{p'}(Q_0))}\right).
$$
Our claim follows by a  duality argument.
\end{proof}
%

\subsection{Recovery of the limit equation}
Our next aim is identifying the limit equation.  
Let us define 
$$
A^\Delta(t,x) := \sum_{k=0}^{N-1} \chi_{[t_k,t_{k+1})}(t) A(t_k,x,u^k,\nabla u^k)\chi_{\Omega(t_k)}(x).
$$

\begin{lemma}
\label{weak_flux}
There exists a function $\bar A \in L^{p'}(Q_T)^d$ such that 
$A^\Delta \rightharpoonup \bar A$ in $L^{p'}(Q_T)^d$ as $N \to \infty$,
up to a subsequence. 
Moreover $\bar A$ is supported in $\widetilde\Omega$.
\end{lemma}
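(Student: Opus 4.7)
The plan is to get the weak compactness from a uniform $L^{p'}$ bound on $A^\Delta$ and then identify the support of any weak limit by exploiting that $A^\Delta$ vanishes outside $\Omega^\Delta$, together with the Hausdorff/$L^1$ convergence $\chi_{\Omega^\Delta}\to\chi_{\widetilde\Omega}$ provided by Lemma \ref{basic_fact}.

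First, I would establish the uniform bound. By the structural assumption \eqref{L1} applied slice by slice, and using $(p-1)p'=p$, one gets
\[
|A^\Delta(t,x)|^{p'}\,\le\, C\bigl(|\nabla u^\Delta(t,x)|^{p}+b(t,x)^{p'}\bigr)\chi_{\Omega^\Delta}(t,x)\qquad \text{a.e.\ in }Q_T.
\]
Integrating over $Q_T$ and splitting the integral as $\sum_{k=0}^{N-1}\int_{t_k}^{t_{k+1}}\int_{\Omega(t_k)}$, the gradient term is controlled uniformly by Lemma \ref{nabla_bound} and the $b$-term by $\|b\|_{L^{p'}(Q_T)}$, which is finite by Assumption \ref{ass3}. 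Hence $\{A^\Delta\}$ is bounded in $L^{p'}(Q_T)^d$.

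Second, since $L^{p'}(Q_T)^d$ is reflexive (as $1<p<\infty$ so $1<p'<\infty$), the Banach--Alaoglu theorem yields a subsequence (not relabeled) and a function $\bar A\in L^{p'}(Q_T)^d$ with $A^\Delta\rightharpoonup\bar A$ weakly in $L^{p'}(Q_T)^d$.

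It remains to show $\bar A$ is supported in $\widetilde\Omega$. Pick any test field $\phi\in L^p(Q_T)^d$ with $\phi=0$ a.e.\ on $\widetilde\Omega$. Because $A^\Delta=A^\Delta\chi_{\Omega^\Delta}$ by construction,
\[
\int_{Q_T} A^\Delta\cdot\phi\,dxdt\,=\,\int_{Q_T} A^\Delta\cdot\phi\,\chi_{\Omega^\Delta\setminus\widetilde\Omega}\,dxdt\,\le\,\|A^\Delta\|_{L^{p'}(Q_T)}\,\bigl\|\phi\,\chi_{\Omega^\Delta\setminus\widetilde\Omega}\bigr\|_{L^p(Q_T)}.
\]
The first factor is bounded by the step above, while the second tends to zero by dominated convergence, since $\chi_{\Omega^\Delta\setminus\widetilde\Omega}\le 1$ and $|\Omega^\Delta\setminus\widetilde\Omega|\to 0$ by Lemma \ref{basic_fact}. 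Passing to the limit and using weak convergence on the left-hand side gives $\int_{Q_T}\bar A\cdot\phi\,dxdt=0$ for every such $\phi$, hence $\bar A=0$ a.e.\ in $Q_T\setminus\widetilde\Omega$.

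No step looks genuinely hard here: the main (minor) point of care is the bookkeeping that shows $|A^\Delta|^{p'}$ is pointwise dominated by a quantity whose integral is the sum over slices already estimated in Lemma \ref{nabla_bound}; everything else is weak compactness plus the $L^1$ convergence of characteristic functions from Lemma \ref{basic_fact}.
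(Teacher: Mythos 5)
Your proof is correct and follows the same route the paper intends: the paper's own proof is the one-liner ``this follows directly from \eqref{L1} and Lemma \ref{nabla_bound}'', i.e.\ exactly your uniform $L^{p'}$ bound plus weak compactness, with the support claim left implicit. Your additional verification that $\bar A$ vanishes off $\widetilde\Omega$ via Lemma \ref{basic_fact} correctly fills in the detail the paper omits.
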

\begin{proof}
This follows directly from \eqref{L1} and Lemma \ref{nabla_bound}.
\end{proof}
To identify $\bar A$ we will require a number of auxiliary results. 
\begin{lemma}
\label{auxtau}
Let $\phi$ be smooth and such that $\mbox{supp}\ \phi \subset {\Omega}^\Delta\cap \widetilde{\Omega}$. Given $\tau>0$ we define 
$$
\rho^\tau:= \frac{1}{\tau} \int_{t-\tau}^t ((\phi(t)-\phi(s)) u(s)\, ds
$$ 
(we set $\rho^\tau:= 0$ when the previous formula does not make sense), being $u$ the function defined in Lemma \ref{u_conv}. Then $\rho^\tau \in \mathcal{V}$ {for any $\tau >0$ and} $\rho^\tau\to 0$ in $\mathcal{V}$ as $\tau \to 0$.
\end{lemma}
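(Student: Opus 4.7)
\emph{The plan} is a direct gradient estimate. Since $\phi$ is smooth, the incremental quotients in the definition of $\rho^\tau$ are of order $\tau$, which should make the $L^p$-norm of the spatial gradient of $\rho^\tau$ of order $\tau$. Membership in $\mathcal{V}$ then follows by a standard spacetime mollification, provided we know that $\rho^\tau$ has compact support in $\widetilde\Omega$ for small $\tau$.

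\textbf{Support.} Since $\widetilde\Omega$ is open and bounded and $\mathrm{supp}\,\phi$ is closed with $\mathrm{supp}\,\phi\subset\widetilde\Omega$, the set $K := \mathrm{supp}\,\phi$ is compact and $d_0 := \dist(K,\partial\widetilde\Omega) > 0$. If $\rho^\tau(t,x)\neq 0$ then either $\phi(t,x)\neq 0$ (so $(t,x)\in K$) or $\phi(s,x)\neq 0$ for some $s\in[t-\tau,t]$ (so $(s,x)\in K$ with $|t-s|\le\tau$); in both cases $(t,x)$ lies within distance $\tau$ of $K$, so for $\tau<d_0$ the support of $\rho^\tau$ is compactly contained in $\widetilde\Omega$.

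\textbf{Gradient estimate.} By Lemma~\ref{u_conv}, $u,\nabla u\in L^p(\widetilde\Omega)$. Differentiating under the integral sign (justified by testing against $C_c^\infty(\widetilde\Omega)$ and Fubini) yields
$$\nabla \rho^\tau(t,x) = \frac{1}{\tau}\int_{t-\tau}^t\bigl[(\nabla\phi(t,x) - \nabla\phi(s,x))u(s,x) + (\phi(t,x) - \phi(s,x))\nabla u(s,x)\bigr]\,ds.$$
Using $|\nabla\phi(t,x)-\nabla\phi(s,x)| \le \|\phi\|_{C^2}\tau$ and $|\phi(t,x)-\phi(s,x)| \le \|\phi\|_{C^1}\tau$ for $s\in[t-\tau,t]$, we obtain
$$|\nabla \rho^\tau(t,x)| \le C(\phi)\,\tau\bigl((M_\tau|u|)(t,x) + (M_\tau|\nabla u|)(t,x)\bigr),$$
where $(M_\tau f)(t,x):=\tau^{-1}\int_{t-\tau}^t|f(s,x)|\,ds$. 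Jensen's inequality together with Fubini gives $\|M_\tau f\|_{L^p(Q_T)}\le \|f\|_{L^p(Q_T)}$, hence
$$\|\nabla \rho^\tau\|_{L^p(\widetilde\Omega)} \le C(\phi)\,\tau\bigl(\|u\|_{L^p(\widetilde\Omega)}+\|\nabla u\|_{L^p(\widetilde\Omega)}\bigr),$$
which tends to $0$ as $\tau\to 0$.

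\textbf{Conclusion.} For $\tau<d_0$ the function $\rho^\tau$ has compact support $K_\tau\subset\widetilde\Omega$ with $\nabla\rho^\tau\in L^p(\widetilde\Omega)$, so a standard spacetime mollification $\eta_\epsilon*\rho^\tau$ with $\epsilon<\dist(K_\tau,\partial\widetilde\Omega)$ lies in $C_c^\infty(\widetilde\Omega)\subset C_c^1(\widetilde\Omega)$ and satisfies $\nabla(\eta_\epsilon*\rho^\tau)=\eta_\epsilon*\nabla\rho^\tau\to\nabla\rho^\tau$ in $L^p(\widetilde\Omega)$ as $\epsilon\to 0$. This proves $\rho^\tau\in\mathcal{V}$, and combined with the estimate above it also gives $\rho^\tau\to 0$ in $\mathcal{V}$. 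The argument has no serious obstacle; the only subtle point is keeping the support calculation inside the regime $\tau<d_0$, which is precisely the regime relevant for the $\tau\to 0$ limit.
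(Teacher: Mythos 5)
Your proof is correct, and while the membership part ($\rho^\tau\in\mathcal{V}$ via compact support in $\widetilde\Omega$ plus mollification) coincides with the paper's, the convergence part takes a genuinely different and more elementary route. The paper treats $\rho^\tau$ as the difference $\phi(t)\,u^\tau - (\phi u)^\tau$ of two backward time-averages, invokes the Lebesgue differentiation theorem for Bochner integrals (Diestel--Uhl, Ch.~2, Th.~9) on cylinders $(t_a,t_b)\times K$ to get $\rho^\tau\to 0$ in $L^p(t_a,t_b;W^{1,p}(K))$, and then covers $\mathrm{supp}\,\phi$ by finitely many such cylinders; this yields convergence with no rate. You instead exploit the smoothness of $\phi$ to bound $|\phi(t)-\phi(s)|$ and $|\nabla\phi(t)-\nabla\phi(s)|$ by $C(\phi)\tau$ on the averaging window, reducing the whole estimate to the $L^p$-boundedness of the one-sided averaging operator $M_\tau$ (Jensen plus Fubini). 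What this buys is a quantitative bound $\|\rho^\tau\|_{\mathcal{V}}\le C(\phi)\,\tau\,(\|u\|_{L^p(\widetilde\Omega)}+\|\nabla u\|_{L^p(\widetilde\Omega)})$ with an explicit $O(\tau)$ rate, and it avoids both the vector-valued differentiation theorem and the covering argument; the price is that it genuinely uses $\phi\in C^2$, whereas the paper's argument would survive with less time-regularity of $\phi$ (irrelevant here, since $\phi$ is assumed smooth). Two minor points you handle correctly: the points $(s,x)$ appearing in the gradient formula lie within distance $\tau$ of $\mathrm{supp}\,\phi$, so $\nabla u=\nabla\tilde u\in L^p$ there; and the restriction $\tau<d_0$ matches the paper's own ``for small $\tau$'' and is harmless for the limit.
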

\begin{proof}
Since $\mbox{supp}\, \rho^\tau \subset {\Omega}^\Delta\cap \widetilde{\Omega}$ for small $\tau$, we can approximate $\rho^\tau$ in the norm of $\mathcal{V}$ by functions in $C_c^1(\widetilde\Omega)$ convolving with a mollifying sequence,
so that $\rho^\tau\in\mathcal{V}$. 

Let now $K\subset \R^d$ be an open set such that $K \subset \Omega^\Delta(t)$ a.e. $t\in (t_a,t_b)$ for some values $0\le t_a<t_b\le T$. Thanks to 
\cite[Ch. 2, Th. 9]{VectorMeasures}, 
we get that $\rho^\tau \to 0$ in $L^p(t_a,t_b;W^{1,p}(K))$ as $\tau \to 0$. Covering $\mbox{supp}\, \phi$ with a finite collection of cylinders of the form $(t_a,t_b)\times K$ yields the desired result.
\end{proof}
\begin{lemma}\label{ext1}
Let $\phi$ be smooth and such that $\mbox{supp}\ \phi 
 \subset {\Omega}^\Delta\cap \widetilde{\Omega}$. 
Then
\begin{equation}\label{limitE1}
\limsup_{N\to\infty} \int_0^T\int_{\Omega^\Delta(t)} A^\Delta \cdot \nabla u^\Delta \phi\, dx dt \leq
\int_0^T\int_{\Omega(t)} \bar A \cdot \nabla u\, \phi\, dx dt.
\end{equation}
\end{lemma}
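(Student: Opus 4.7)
The plan is to establish an energy-type identity for $u^\Delta$ by testing the slice equation against $u^\Delta\phi$, pass to the limit $N\to\infty$ using strong $L^q$ convergence of $u^\Delta$ and weak $L^{p'}$ convergence of $A^\Delta$, and then match the resulting expression to $\int\bar A\cdot\nabla u\,\phi$ by invoking the distributional equation satisfied by $u$ in $\widetilde\Omega$.

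First I would fix $N$ large enough that $\mathrm{supp}\,\phi\subset\Omega^\Delta$, which is possible by Lemma~\ref{basic_fact}. Since $\phi$ is compactly supported in $\Omega^\Delta\cap\widetilde\Omega$, it vanishes in a neighbourhood of every hyperplane $\{t_k\}\times Q_0$, so $u^k\phi$ is an admissible test function in each cylinder $[t_k,t_{k+1})\times\Omega(t_k)$ for \eqref{model2}. Combined with the duality bound of Lemma~\ref{duality_estimate}, the chain-rule identity $\langle u^k_t,u^k\phi\rangle=\tfrac{d}{dt}\int\tfrac12(u^k)^2\phi\,dx-\int\tfrac12(u^k)^2\phi_t\,dx$ holds on each slice; decomposing $\nabla(u^k\phi)=\phi\nabla u^k+u^k\nabla\phi$ and summing over $k$ (with telescoping boundary terms at the $t_k$ vanishing thanks to the support condition on $\phi$) I would obtain the slice-wise energy identity
\begin{equation*}
\int_0^T\!\!\int A^\Delta\cdot\nabla u^\Delta\,\phi\,dxdt=\int_0^T\!\!\int\tfrac12(u^\Delta)^2\phi_t\,dxdt-\int_0^T\!\!\int A^\Delta\cdot u^\Delta\,\nabla\phi\,dxdt.
\end{equation*}

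Next I would pass $N\to\infty$ term by term. Lemmas~\ref{linftybound} and~\ref{u_conv} give $u^\Delta\to u$ strongly in every $L^q(Q_T)$, $q<\infty$, hence $(u^\Delta)^2\to u^2$ in $L^1(Q_T)$ and the first integral on the right converges to $\int\tfrac12 u^2\phi_t$. For the mixed term, Lemma~\ref{weak_flux} yields $A^\Delta\rightharpoonup\bar A$ in $L^{p'}(Q_T)^d$, which pairs with the strong convergence $u^\Delta\nabla\phi\to u\nabla\phi$ in $L^p(Q_T)^d$ to deliver convergence to $\int\bar A\cdot u\nabla\phi$.

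It then remains to recognize $\int\tfrac12 u^2\phi_t-\int\bar A\cdot u\nabla\phi$ as $\int\bar A\cdot\nabla u\,\phi$. Passing to the limit in the slice-wise equation $u^\Delta_t=\mathrm{div}\,A^\Delta$ on interior cylinders of $\widetilde\Omega$ (using Lemmas~\ref{45} and~\ref{weak_flux}) yields $u_t=\mathrm{div}\,\bar A$ in $\mathcal D'(\widetilde\Omega)$; formally testing this equation against $u\phi$ gives the required identity, and in fact equality in \eqref{limitE1} (stronger than the claim). The main obstacle is justifying this last chain-rule step rigorously, since $u$ has no pointwise time regularity and $u\phi$ is not a priori an admissible test function for $u_t$. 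This is precisely the role of Lemma~\ref{auxtau}: one tests the limit equation against the Steklov regularization $(u\phi)_\tau$, whose time derivative is a legitimate finite difference so the chain rule applies; letting $\tau\to 0$ and absorbing the commutator $\rho^\tau=\phi u_\tau-(u\phi)_\tau$ (which converges to $0$ in $\mathcal V$, hence has vanishing pairing with $\bar A$) delivers the desired identity and completes the proof.
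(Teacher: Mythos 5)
Your argument is correct, but it is organized differently from the paper's. You first extract the exact energy identity for $u^\Delta$ by testing with $u^\Delta\phi$, pass to the limit to get $\lim_N\int A^\Delta\cdot\nabla u^\Delta\,\phi=\int\tfrac12 u^2\phi_t-\int\bar A\cdot u\,\nabla\phi$, and then identify the right-hand side with $\int\bar A\cdot\nabla u\,\phi$ by testing the limit equation $u_t=\mathrm{div}\,\bar A$ (valid in $\mathcal D'(\widetilde\Omega)$ by Lemmas \ref{45} and \ref{weak_flux}) against $u\phi$ via Steklov averaging. The paper instead never applies the chain rule to the limit $u$: it tests the \emph{approximate} equation with the single function $(u^\Delta-u^\tau)\phi$, where $u^\tau$ is the Steklov average of $u$, passes $N\to\infty$ first, and then uses a Young/convexity inequality on the difference quotient $\frac{1}{\tau}\bigl(\phi(t)u(t)-\phi(t-\tau)u(t-\tau)\bigr)u(t)$ to obtain the one-sided bound before letting $\tau\to0$. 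Your route buys a cleaner statement -- in fact equality in \eqref{limitE1}, which is stronger than what is claimed or needed for the Minty--Browder step in Lemma \ref{MB} -- at the price of having to justify the Lions-type chain rule $\int\langle u_t,u\phi\rangle\,dt=-\int\tfrac12 u^2\phi_t$ on interior cylinders of $\widetilde\Omega$; this is legitimate here because $u\in L^p_{loc}(W^{1,p})\cap L^\infty$ and $u_t\in L^{p'}_{loc}(W^{-1,p'})$ on such cylinders (Lemma \ref{45}), with the commutator controlled exactly as in Lemma \ref{auxtau}, though for $1<p<2$ you should work in the space $W^{1,p}_0\cap L^2$ with dual $W^{-1,p'}+L^2$ as in Proposition \ref{Lionspge2}. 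The paper's version is slightly more robust (it only touches $u$ through the regularized test function $u^\tau$) but yields only the inequality; both proofs share the same essential ingredients (slice-wise chain rule for $u^\Delta$, weak--strong pairing of $A^\Delta$ with $u^\Delta\nabla\phi$, and the commutator lemma).
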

\begin{proof} Let $\tau >0$ and define
$$
u^\tau(t) = \frac{1}{\tau} \int_{t-\tau}^t u(s)\, ds.
$$
By multiplying the equation for $u^\Delta$ by $(u^\Delta-u^\tau) \phi$ and integrating by parts we get
$$
\int_0^T\int_{\Omega^\Delta(t)} (u^\Delta-u^\tau)  u^\Delta_t \phi \, dx dt = \int_0^T\int_{\Omega^\Delta(t)} \left(u^\Delta-u^\tau \right) \div A^\Delta \phi \, dx dt
$$
$$
=-\int_0^T\int_{\Omega^\Delta(t)} A^\Delta \cdot \nabla u^\Delta\phi\, dx dt
- \int_0^T\int_{\Omega^\Delta(t)} A^\Delta \cdot \nabla \phi\,  u^\Delta \, dx dt
$$
$$
+\int_0^T\int_{\Omega^\Delta(t)} A^\Delta \cdot \nabla u^\tau \phi\, dx dt
+ \int_0^T\int_{\Omega^\Delta(t)} A^\Delta \cdot \nabla \phi\, u^\tau \, dx dt:= I+II+III+IV.
$$
Let us elaborate on the left hand side of the previous equality. We compute
$$
\int_0^T\int_{\Omega^\Delta(t)} u^\Delta  u^\Delta_t \phi \, dx dt = \int_0^T\int_{\Omega^\Delta(t)} \phi \frac{\partial}{\partial t} \left[\frac{(u^\Delta(t))^2}{2}\right] \, dx dt
$$
$$
=- \int_0^T\int_{\Omega^\Delta(t)} \frac{(u^\Delta(t))^2}{2} \phi_t \, dx dt \to
- \int_0^T\int_{\Omega(t)} \frac{u^2}{2} \phi_t\, dx dt
$$
as $N \to \infty$, thanks to Lemma \ref{u_conv}. 
Next, we have that 
\begin{eqnarray*}
- \int_0^T\int_{\Omega^\Delta(t)}u^\tau  u^\Delta_t \phi\, dx dt &=& - \int_0^T\int_{\Omega^\Delta(t)} u^\Delta_t  \frac{\phi}{\tau} \int_{t-\tau}^t u(s)\, ds \, dx dt
\\
&=& - \int_0^T\int_{\Omega^\Delta(t)}u^\Delta_t \left\{(\phi \, u)^\tau +\frac{1}{\tau} \int_{t-\tau}^t ((\phi(t)-\phi(s)) u(s)\, ds \right\} \, dx dt
\\
&=& \int_0^T\int_{\Omega^\Delta(t)}(\phi u)^\tau_t u^\Delta \, dx dt - \int_0^T\int_{\Omega^\Delta(t)}\rho^\tau u^\Delta_t \, dx dt
\\
&=& \int_0^T\int_{\Omega^\Delta(t)}\frac{\phi(t)u(t)-\phi(t-\tau)u(t-\tau)}{\tau} u^\Delta \, dx dt - \int_0^T\int_{\Omega^\Delta(t)}\rho^\tau u^\Delta_t \, dx dt
\\
&=:& A+B.
\end{eqnarray*}
Thanks to our assumptions on $\phi$ we have that
$$
B=- \int_0^T \int_{\Omega^\Delta(t)}\rho^\tau \sum_{k=0}^{N-1} \chi_{[t_k,t_{k+1})} u^k_t\chi_{\Omega(t_k)} \, dxdt
$$
for $\tau$ small enough. We then pass to the limit in $B$ by Lebesgue's dominated convergence theorem. Indeed, if $\tau$ is small enough 
Lemma \ref{45} enables to get a.e. convergence of the integrand, domination follows as the duality product is uniformly bounded. To deal with the limit of $A$ as $N\to \infty$ we may use Lemma \ref{u_conv}(4) together with the fact that the incremental ratio is essentially bounded (after Lemma \ref{linftybound}). Gathering all the previous and letting $N\to \infty$, we find that 
$$
- \int_0^T\int_{\Omega^\Delta(t)}u^\tau  u^\Delta_t \phi\, dx dt \rightarrow \int_0^T\int_{\Omega(t)}\frac{\phi(t)u(t)-\phi(t-\tau)u(t-\tau)}{\tau} u \, dx dt 
- \int_{Q_T}\rho^\tau \widetilde\Lambda \, dx dt,
$$
 which is bounded from below by
$$
\int_0^T\int_{\Omega(t)}\frac{\phi(t)-\phi(t-\tau)}{\tau} \frac{u^2(t)}{2} \, dx dt - \int_{Q_T}\rho^\tau 
\widetilde\Lambda \, dx dt.
$$
Letting $\tau\to 0+$ and using Lemma \ref{auxtau}, we obtain
$$
 \int_0^T\int_{\Omega(t)}\phi_t \frac{u^2(t)}{2} \, dx dt,
$$
so that $\liminf_{\tau\to 0+} \liminf_{N \to \infty} (I+II+II+IV) \ge 0.$

We are now ready to compute the limit of $I+II+III+IV$ when $N \to \infty$. 
First, we find out that
$$
II \rightarrow -\int_0^T \int_{\Omega(t)} \bar A \nabla \phi\, u \, dx dt
$$
using Lemmas \ref{u_conv}(4) and \ref{weak_flux}. 
We also have
$$
III \rightarrow \int_0^T \int_{\Omega(t)} \bar A \nabla u^\tau \phi \, dx dt
$$
as $N \to \infty$ (clearly $\nabla u^\tau \in L^p(Q_T)^d$). Note that $\nabla u^\tau = (\nabla u)^\tau \to \nabla u$ in $L_{loc}^p(\widetilde\Omega)^d$, as in the proof of Lemma \ref{auxtau}. Taking limit $\tau \to 0$, the above integral converges to
$$
\int_0^T \int_{\Omega(t)} \bar A \nabla u\, \phi \, dx dt.
$$
Finally, arguing as before we get that
$$
IV \rightarrow \int_0^T \int_{\Omega(t)} \bar A \nabla \phi \, u^\tau \, dx dt
$$
as $N \to \infty$, 
which converges to 
$$
\int_0^T \int_{\Omega(t)} \bar A \nabla \phi \, u \, dx dt
$$
after taking the limit $\tau \to 0$. Hence 
$$
\limsup_{N \to \infty} \int_0^T \int_{\Omega(t)} A^\Delta \nabla u^\Delta \phi \, dx dt \le 
 \int_0^T \int_{\Omega(t)} \bar A \phi \nabla u \, dx dt
$$
and the result follows.
\end{proof}
\begin{lemma}
\label{MB}
There holds $\bar A(t,x) =A(t,x,u,\nabla u)$ a.e. in $\widetilde\Omega$.
\end{lemma}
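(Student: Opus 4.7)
The plan is to use the Minty--Browder monotonicity trick. The idea is to combine the monotonicity inequality \eqref{L3} with the energy-type upper bound furnished by Lemma \ref{ext1}, together with the weak convergence of $A^\Delta$ established in Lemma \ref{weak_flux} and the a.e.\ convergence $u^\Delta\to u$ from Lemma \ref{u_conv}, to conclude by a duality/density argument that $\bar A$ must coincide with $A(t,x,u,\nabla u)$.

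Concretely, fix a nonnegative test function $\phi\in C_c^\infty(\widetilde\Omega)$; by Lemma \ref{basic_fact}, $\mathrm{supp}\,\phi\subset\Omega^\Delta\cap\widetilde\Omega$ for all sufficiently large $N$. Given a smooth vector field $w$ and $\lambda>0$, set $\xi^\Delta(t,x):=\nabla u(t,x)+\lambda w(t,x)$, and introduce the piecewise-constant time function $t^\Delta(t):=t_k$ for $t\in[t_k,t_{k+1})$, so that $t^\Delta\to t$ uniformly as $\Delta\to 0$. Define
$$
B^\Delta(t,x):=A\bigl(t^\Delta(t),x,u^\Delta(t,x),\xi^\Delta(t,x)\bigr).
$$
By \eqref{L3}, applied pointwise a.e.\ with the two gradient arguments $\nabla u^\Delta$ and $\xi^\Delta$,
\begin{equation}\label{minty1}
\int_0^T\!\!\int_{\Omega^\Delta(t)}\bigl(A^\Delta-B^\Delta\bigr)\cdot\bigl(\nabla u^\Delta-\xi^\Delta\bigr)\,\phi\,dx\,dt\ \ge\ 0.
\end{equation}

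Now I would pass to the limit $N\to\infty$ in each of the four terms obtained by expanding \eqref{minty1}. The first term, $\int A^\Delta\cdot\nabla u^\Delta\phi$, is controlled from above by $\int_{\widetilde\Omega}\bar A\cdot\nabla u\,\phi\,dx\,dt$ thanks to Lemma \ref{ext1} (using $\limsup$). The term $\int A^\Delta\cdot\xi^\Delta\phi$ converges to $\int_{\widetilde\Omega}\bar A\cdot(\nabla u+\lambda w)\,\phi\,dx\,dt$ by the weak convergence of $A^\Delta$ in $L^{p'}(Q_T)^d$ (Lemma \ref{weak_flux}) combined with the fact that $\xi^\Delta\phi$ is a fixed $L^p$ function. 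For the two terms involving $B^\Delta$, the key point is the strong convergence
$$
B^\Delta\ \longrightarrow\ A(t,x,u,\nabla u+\lambda w)\quad\text{in }L^{p'}(\widetilde\Omega)^d,
$$
which follows from Carath\'eodory continuity combined with \eqref{L4}, the a.e.\ convergences $t^\Delta\to t$ and $u^\Delta\to u$ (Lemma \ref{u_conv}), and the $L^{p'}$ domination $|B^\Delta|\le c|\xi^\Delta|^{p-1}+b$ from \eqref{L1} (Vitali/dominated convergence). Combined with the weak convergence $\nabla u^\Delta\weakto\nabla u$ in $L^p(\widetilde\Omega)^d$ (from Lemma \ref{u_conv}(1) and (5)) and the strong convergence of $\xi^\Delta$, both $\int B^\Delta\cdot\nabla u^\Delta\phi$ and $\int B^\Delta\cdot\xi^\Delta\phi$ pass to the natural limits. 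Collecting everything,
$$
\int_{\widetilde\Omega}\bigl(\bar A-A(t,x,u,\nabla u+\lambda w)\bigr)\cdot\bigl(-\lambda w\bigr)\,\phi\,dx\,dt\ \ge\ 0,
$$
i.e.\
$$
\int_{\widetilde\Omega}\bigl(\bar A-A(t,x,u,\nabla u+\lambda w)\bigr)\cdot w\,\phi\,dx\,dt\ \le\ 0.
$$

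To conclude, I let $\lambda\to 0^+$. By continuity of $A$ in the last variable and the growth bound \eqref{L1}, $A(t,x,u,\nabla u+\lambda w)\to A(t,x,u,\nabla u)$ strongly in $L^{p'}(\widetilde\Omega)^d$ (dominated convergence), which gives
$$
\int_{\widetilde\Omega}\bigl(\bar A-A(t,x,u,\nabla u)\bigr)\cdot w\,\phi\,dx\,dt\ \le\ 0
$$
for every smooth $w$ and every $\phi\ge 0$ in $C_c^\infty(\widetilde\Omega)$. Replacing $w$ by $-w$ yields equality, and the density of such $w\phi$ in $L^p(\widetilde\Omega)^d$ forces $\bar A=A(t,x,u,\nabla u)$ a.e.\ in $\widetilde\Omega$.

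The main obstacle I anticipate is the rigorous justification of the convergence of the two cross-terms containing $B^\Delta$: one must carefully combine the strong $L^{p'}$-convergence of $B^\Delta$ (which relies on the full strength of \eqref{L4}, the a.e.\ convergence of $u^\Delta$, and the uniform time-step approximation $t^\Delta\to t$) with the weak convergence of $\nabla u^\Delta$ on the genuinely non-cylindrical domain $\widetilde\Omega$. Everything else is standard Minty--Browder.
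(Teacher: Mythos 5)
Your proposal is correct and follows essentially the same Minty--Browder route as the paper: the monotonicity inequality \eqref{L3} at the discrete level, the $\limsup$ bound from Lemma \ref{ext1}, the weak convergence of $A^\Delta$ from Lemma \ref{weak_flux}, and the strong $L^{p'}$ convergence of the comparison flux via \eqref{L4} and Lemma \ref{u_conv}. The only cosmetic difference is that you plug in $\xi=\nabla u+\lambda w$ directly and carry out the final $\lambda\to 0^+$ step explicitly, whereas the paper keeps a smooth comparison function $g$ and cites the standard Minty--Browder conclusion.
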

\begin{proof}
We use Minty--Browder's technique.
 Let $0\le \phi \in C_0^1(Q_T)$ with 
${\rm supp}\,\phi\subset {\Omega}^\Delta\cap\widetilde{\Omega}$, and 
let $g \in C^1(\overline{Q_T})$.
Thanks to the monotonicity assumption \eqref{L3},
we have
$$
\int_0^T \sum_{k=1}^{N-1} \int_{\Omega(t_k)} (A(t_k,x,u^\Delta,\nabla u^\Delta)-A(t_k,x,u^\Delta,\nabla g)) (\nabla u^\Delta(t)-\nabla g)\phi \, dxdt \geq 0.
$$
From Lemma \ref{ext1} we get
$$
\limsup_{N\to\infty} \int_0^T \sum_{k=1}^{N-1} \int_{\Omega(t_k)} A(t_k,x,u^\Delta,\nabla u^\Delta) \nabla u^\Delta \phi\, dxdt \le 
\int_0^T \int_{\Omega(t)} \bar A \nabla u  \phi\, dxdt.
$$

We now show that
\begin{equation}
\label{expand}
\int_0^T \sum_{k=1}^{N-1} \int_{\Omega(t_k)} A(t_k,x,u^\Delta,\nabla g) 
\nabla u^\Delta\phi \, dxdt \rightarrow \int_0^T \int_{\Omega(t)} A(t,x,u,\nabla g) \nabla u \, \phi\, dxdt,
\end{equation}
as $N\to\infty$. Indeed, recalling \eqref{L4} we have
$$
\left|A(t,x,u,\nabla g) - \!\sum_{k=1}^{N-1}\! A(t_k,x,u^\Delta,\nabla g) 
\chi_{[t_k,t_{k+1})}  \right| \le \!
\sum_{k=1}^{N-1} \chi_{[t_k,t_{k+1})}\! \left( w(|t-t_k|)\!+\! C|u(t,x)\!-\!u^\Delta(t,x)| 
\right)\! |\nabla g|^{p-1}.
$$
Note that the right-hand side above converges to zero a.e. in $\widetilde \Omega$ and also in 
$L^p(\widetilde \Omega)$ for all $p<\infty$  
as $N \to \infty$. On the other hand, $\nabla u^\Delta \rightharpoonup \nabla u$ weakly in $L_{loc}^p(\widetilde\Omega)^d$ thanks to Lemma \ref{u_conv},  
which yields \eqref{expand}. In a similar way we show that
$$
\int_0^T \sum_{k=1}^{N-1} \int_{\Omega(t_k)} A(t_k,x,u^\Delta,\nabla g) 
\nabla g \, \phi \, dxdt \rightarrow \int_0^T \int_{\Omega(t)} A(t,x,u,\nabla g) \nabla g \, \phi\, dxdt.
$$
Finally we obtain that
$$
\int_0^T \sum_{k=1}^{N-1} \int_{\Omega(t_k)} A(t_k,x,u^\Delta,\nabla u^\Delta) 
\nabla g \, \phi \, dxdt \rightarrow \int_0^T \int_{\Omega(t)} \bar A \nabla g \, \phi\, dxdt
$$
thanks to Lemma \ref{weak_flux}. Summing up, we obtain
$$
\int_0^T \int_{\Omega(t)} (\bar A -A(t,x,u,\nabla g)) (\nabla u (t)-\nabla g)\phi\, dxdt \geq 0.
$$
This implies that $\bar A=A(t,x,u,\nabla u)$ for a.e. $(t,x)\in \mbox{supp} \, \phi$, by means of Minty--Browder's method (see for instance \cite[Ch. 9.1]{Evans}). 
\end{proof}
\subsection{Recovery of boundary and initial conditions}

\begin{proposition}
The function $u$ defined in Lemma \ref{u_conv} is a weak solution of problem \ref{model1} in the sense of Definition \ref{solucion_distribucional}. Furthermore, $u(t)\rightarrow u_0$ a.e. as $t\to 0$.
\end{proposition}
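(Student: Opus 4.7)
The plan is to verify the four conditions of Definition \ref{solucion_distribucional} and then recover the pointwise initial datum. The condition $u_t\in\mathcal V^*$ is Corollary \ref{important}, while Lemmas \ref{weak_flux} and \ref{MB} give $A(t,x,u,\nabla u)\in L^{p'}(\widetilde\Omega)^d$, so the main tasks reduce to (a) proving $u-\psi\in\mathcal V$; (b) deriving the integral identity \eqref{weakId1}; and (c) establishing the boundary and initial conditions including the stated a.e.\ convergence.

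For (a), I would observe that by Proposition \ref{Lionspge2} each $u^k-\psi$ belongs to $L^p(t_k,t_{k+1};W_0^{1,p}(\Omega(t_k)))$, so its zero-extension $\tilde u^\Delta-\psi$ lies in $L^p(0,T;W_0^{1,p}(Q_0))$. Passing to the weak limit via Lemma \ref{u_conv}(1), the same holds for $\tilde u-\psi$; combined with Lemma \ref{u_conv}(5), which tells us that $\tilde u-\psi$ vanishes on $Q_T\setminus\widetilde\Omega$, and with the Lipschitz regularity of $\widetilde\Omega$, a slicing argument yields $u(t,\cdot)-\psi(t,\cdot)\in W_0^{1,p}(\Omega(t))$ for a.e.\ $t$; by approximation with $C_c^1(\widetilde\Omega)$ functions this gives $u-\psi\in\mathcal V$.

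For (b), I would test each equation in \eqref{model2} against $\phi\in\mathcal D([0,T)\times Q_0)$ with $\mathrm{supp}\,\phi\subset\subset\widetilde\Omega$, integrate by parts in both variables and sum in $k$. For $N$ large enough Lemma \ref{basic_fact} gives $\mathrm{supp}\,\phi\subset\Omega^\Delta$, so no lateral contributions arise. The time-slicing boundary terms $\int\bigl(u^k(t_k+)-u^{k-1}(t_k-)\bigr)\phi(t_k)\,dx$ vanish on $\Omega(t_k)\cap\Omega(t_{k-1})$ by the iterative initial condition of \eqref{model2}, and on the symmetric difference $\Omega(t_k)\triangle\Omega(t_{k-1})$ the Hausdorff convergence $\Omega^\Delta\to\widetilde\Omega$ forces $\phi(t_k,\cdot)=0$ for $N$ large. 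The endpoint terms at $t=0$ and $t=T$ are absent by the support assumption. Taking $N\to\infty$ and invoking Lemmas \ref{u_conv}, \ref{weak_flux}, \ref{MB} yields \eqref{weakId1}.

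For (c), I would combine the duality bound on the time derivative (Lemma \ref{duality_estimate}) with the uniform $L^\infty$ bound (Lemma \ref{linftybound}): on every compact $K\subset\subset\Omega(0)$ one has $\|u^\Delta(t)-u_0\|_{W^{-1,p'}(K)}\le Ct^{1/p'}$ uniformly in $\Delta$, giving equicontinuity in negative-order Sobolev norms near $t=0$. Interpolating this estimate with the uniform $L^\infty$ bound yields $u^\Delta(t)\to u_0$ in $L^q(K)$ uniformly in $\Delta$ as $t\to 0$ for every $q<\infty$, and therefore $u(t)\to u_0$ in $L^q(\Omega(0))$, hence a.e.\ along a subsequence. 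The same strategy applied on each ``newly created'' set $\Omega(t_k)\setminus\Omega(t_{k-1})$, where the iterative initial condition of \eqref{model2} is $\psi(t_k,\cdot)$, yields the trace $u=\psi$ a.e.\ on the relatively open subsets of $\partial_{-1}\widetilde\Omega$. I expect the hardest step to be this simultaneous handling of the jump cancellation in (b) and of the initial/boundary trace in (c) in the presence of possibly topologically nontrivial jumps of $\Omega(t)$: both rely on upgrading the local compactness of Lemma \ref{comp_time} and the Hausdorff convergence of Lemma \ref{basic_fact} to a uniform-in-$\Delta$ control up to the horizontal pieces of $\partial\widetilde\Omega$, which is most delicate exactly where such expansions or contractions take place.
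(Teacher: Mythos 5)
Your steps (a) and (b) follow the paper's route: the identity \eqref{weakId1} is obtained exactly as in the paper by testing each slice problem, summing in $k$, observing that the matching terms at $t=t_k$ cancel on $\Omega(t_k)\cap\Omega(t_{k-1})$ by construction and that the contributions from the symmetric difference are killed by the support condition on $\phi$, and then passing to the limit with Lemmas \ref{u_conv}, \ref{weak_flux} and \ref{MB}. For membership in $\mathcal V$ you assert the approximation by $C_c^1(\widetilde\Omega)$ functions without detail; the paper makes this concrete with $\eta_n=G(n\rho_n\ast(u-\psi))/n$, but this is a presentational difference, not a gap.

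The genuine problem is in step (c). From the duality bound you correctly get $\|u(t)-u_0\|_{W^{-1,p'}(K)}\le C\,t^{1/p}$ for $K\subset\subset\Omega(0)$, i.e. convergence of $u(t)$ to $u_0$ in a negative-order norm. But ``interpolating'' this with the uniform $L^\infty$ bound does \emph{not} yield strong $L^q(K)$ convergence: a sequence such as $f_t(x)=\sin(x_1/t)$ is bounded in $L^\infty$, tends to $0$ in $W^{-1,p'}(K)$, yet converges in no $L^q$. Convergence in $W^{-1,p'}$ plus an $L^\infty$ bound only identifies the weak-$*$ limit; to rule out oscillations as $t\to 0$ you would need a spatial regularity bound on $u(t)$ that is uniform in $t$, and Lemma \ref{nabla_bound} gives $\nabla u\in L^p$ only in space--time, not slice-wise. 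The paper closes exactly this gap differently: since $(0,t_1)\times K\subset\widetilde\Omega$ for $K\subset\subset\Omega(0)$ and $t_1$ small (Lipschitz regularity of $\widetilde\Omega$), one has $u\in L^p(0,t_1;W^{1,p}(K))$ and $u_t\in L^{p'}(0,t_1;W^{-1,p'}(K))$, so Lemma \ref{interpol} provides a representative in $C([0,t_1];L^2(K))$; the strong $L^2(K)$ limit $\bar u_0$ of $u(t)$ as $t\to 0$ then exists for structural reasons and is identified with $u_0$ by your (correct) distributional computation, giving $L^2_{loc}$ and hence a.e. convergence. Your argument as written proves only weak-$*$ convergence to $u_0$, which is strictly less than the a.e. convergence claimed in the statement; the same defect affects your treatment of the trace on $\partial_{-1}\widetilde\Omega$.
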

\begin{proof}
 Let $\phi\in C^\infty_0(Q_T)$ with 
$\mathrm{supp}\, \phi\subset {\Omega}^\Delta\cap \widetilde\Omega$. 
We fix a value of $k\in \{1,\ldots , N-1\}$ and test the approximating 
problem in  $[t_k,t)\times \Omega(t_k)$ with $t <t_{k+1}$. That is,
\begin{eqnarray*}
& & \int_{\Omega(t_k)} u^\Delta(t) \phi(t)\, dx + \int_{t_k}^{t} \int_{\Omega(t_k)} A^\Delta\cdot \nabla  \phi\, dx ds  \\
& = & \int_{\Omega(t_k)} u^\Delta(t_k) \phi(t_k)\, dx + \int_{t_k}^{t} \int_{\Omega(t_k)} u^\Delta(s)\phi_s \, dx ds
\end{eqnarray*}
for any $t \in [t_k,t_{k+1})$.
By adding these contributions from $0$ to $t\in (t_j,t_{j+1}], j \in \{1,\ldots , N-1\}$ we get
\begin{eqnarray}
& & \int_{\Omega^\Delta(t)} u^\Delta(t)\phi(t)\, dx 
+\int_{0}^{t} \int_{\Omega^\Delta(s)} A^\Delta\cdot \nabla \phi  \, dx ds 
\nonumber
\\
& = & \int_{\Omega(0)} u_0\phi(0)\, dx + \int_{0}^{t} \int_{\Omega^\Delta(s)} u^\Delta(s)\phi_s \, dx ds \label{esto} \\ %
& + &
\sum_{k=1}^j \left( \int_{\Omega(t_{k})} u^\Delta(t_k+)\phi(t_k)\, dx - \int_{\Omega(t_{k-1})} u^\Delta(t_k-)\phi(t_k)\, dx \right).
\nonumber
\end{eqnarray}
Since ${\rm supp}\, \phi\subset\Omega^\Delta$, we also have
\begin{eqnarray*}
& &   \int_{\Omega(t_{k})} u^\Delta(t_k+)\phi(t_k)\, dx - \int_{\Omega(t_{k-1})} u^\Delta(t_k-)\phi(t_k)\, dx  \\ & = &
\int_{\Omega(t_{k})\backslash \Omega(t_{k-1})} \psi(t_k)\phi(t_k)\, dx - \int_{\Omega(t_{k-1})\backslash \Omega(t_k)} u^\Delta(t_k-)\phi(t_k)\, dx =0.
\end{eqnarray*}
Thanks to Lemma \ref{u_conv}(4), $u^\Delta$ converges strongly to $u$ 
in $L^1(\mbox{supp}\, \phi)$. Hence we can pass to the limit 
in \eqref{esto} and obtain
\begin{equation}
\nonumber
\begin{array}{ll}
\displaystyle \int_{\Omega(t)} u(t)\phi(t)\, dx  +
\int_{0}^{t} \int_{\Omega(s)} A(t,x,u,\nabla u)\cdot \nabla \phi \, dx ds \\ \\
\displaystyle =  \int_{\Omega(0)} u_0\phi(0)\, dx + \int_{0}^{t} \int_{\Omega(s)} u(s)\phi_s \, dx ds
\end{array}
\end{equation}
for a.e. $0<t\le T$, which holds for any $\phi\in C^\infty_0(Q_T)$ 
with $\mathrm{supp}\, \phi\subset \widetilde\Omega$. 
This can be stated as
\begin{equation}
\nonumber
u_t = \div A(t,x,u,\nabla u)\quad \mbox{in}\ \mathcal{D}'(\widetilde\Omega).
\end{equation}
Furthermore, since 
$\tilde u \in L^p(0,T;W^{1,p}(Q_0))$ and $\tilde u = \psi$ a.e. $Q_T\backslash \widetilde \Omega$, 
we get that $u(t)-\psi(t) \in W_0^{1,p}(\Omega(t))$ for almost any $t \in (0,T)$. 
Hence we also recover the boundary conditions 
at $\partial_l \widetilde \Omega$ in the limit. 

Let us deal next with the initial condition. Note that for $t$ small enough we have
$$
\int_{\Omega(t)} u(t) \phi(t)\, dx = \int_{\Omega(0)} u_0 \phi(0)\, dx + C(\phi) t
$$
for some $C(\phi)>0$. Here we use that we assume condition \emph{\ref{punto4}} on the time slicing (and specifically on $t_0=0$) as specified at the beginning of  Section \ref{Sec:approximations}. 
 Hence
$$
\lim_{t\to 0} \int_{\Omega(t)} u(t) \phi(t)\, dx = \int_{\Omega(0)} u_0 \phi(0)\, dx.
$$
Now let $K\subset \subset \Omega(0)$ such that $\tilde u \in C(0,t_1,L^2(K))$ for some $t_1>0$ (which exists as $\widetilde\Omega$ is Lipschitz). Then $u(t)$ converges in $L^2(K)$ to some $\bar u_0$ as $t\to 0$. This limit $\bar u_0$ must agree with the distributional limit $u_0$ over $K$. Hence $u(t)\rightarrow u_0$ in $L_{loc}^2(\Omega(0))$ as $t\to 0$. In particular we get a.e. convergence to the initial condition. Note that this works in the same way for any relatively open subset of $\partial_{-1}\widetilde\Omega$.

Finally we justify that $u -\psi \in \mathcal{V}$. Once we have shown that the boundary conditions on $ \partial_l \widetilde\Omega$ are fulfilled, it is easy to construct a sequence $\eta_n$ belonging to $C_c^1(\widetilde\Omega)$ 
and satisfying $\|(u-\psi)-\eta_n\|_\mathcal{V} \to 0$ as $n \to \infty$. For instance, we may consider $G\in C^1(\R)$ such that $|G(t)|\le |t|$, $G(t)=0$ if $|t|\le 1$ and $G(t)=t$ if $|t|\ge 2$. We also consider $\rho_n$ to be a standard mollifying sequence. Then $\eta_n = G(n\rho_n \ast (u-\psi))/n$ has the desired properties.
\end{proof}

The argument above also shows that, given a cylinder $[t_1,t_2]\times K \subset \subset \widetilde\Omega$, the map $t\mapsto u_{|K}$ is $L^2$-continuous in $[t_1,t_2]$. As a consequence, if we fix $t>0$ then $u(s) \rightarrow u(t)$ as $s\to t$ a.e. in $\Omega(t)$. In this sense, we can claim that $t \mapsto u(t) \in C(0,T,L^2(\Omega(t)))$.

\section{Uniqueness of solutions}
\label{so_unique}

We start with a technical result which can been proved as in \cite[Proposition 2.6]{Paronetto}.

\begin{proposition}\label{propar}
Let Assumptions \ref{ass1}--\ref{ass4} and \ref{for_uniqueness} be satisfied. Then
the following integration by parts formula holds: 
\begin{equation}
\label{ibp}
\int_{t_1}^{t_2} \langle u_t,v\rangle_s+ \langle v_t,u\rangle_s \, ds = \int_{\Omega(t_2-)} u(t_2-) v(t_2-) \, dx - \int_{\Omega(t_1+)} u(t_1+) v(t_1+) \, dx,
\end{equation}
for any $0\le t_1 <t_2\le T$ and any $u,v\in \mathcal{V}$,
where $\langle \cdot,\cdot \rangle_t$ indicates the pairing between $W^{-1,p'}(\Omega(t))$ and $W_0^{1,p}(\Omega(t))$.
\end{proposition}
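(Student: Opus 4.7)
The plan is to localize in time using the charts $G(\cdot,t)$ provided by Assumption \ref{for_uniqueness}, reduce on each local piece to an integration by parts on a fixed (cylindrical) domain, and then sum the local contributions. The reason localization is useful is that the pairings $\langle u_t,v\rangle_s$ are meaningful time-slice by time-slice, so the identity (\ref{ibp}) is additive in $t$: it suffices to prove it on an interval $U\cap[0,T]$ around any $t_0\in[t_1,t_2]$, and then a finite cover (by compactness of $[t_1,t_2]$) together with a telescoping argument yields the global statement.

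I would fix $t_0$ and the corresponding neighborhood $U$ and map $G:\Omega(t_0)\times(U\cap[0,T])\to\Omega(t)$ from Assumption \ref{for_uniqueness}. Setting $\hat u(y,s):=u(G(y,s),s)$ and $\hat v(y,s):=v(G(y,s),s)$, the bi-Lipschitz regularity of $G(\cdot,t)$ and the absolute continuity in time of $G(x,\cdot)$ and $|\nabla G(x,\cdot)|$ (together with the $L^1$ bound on $\partial_t|\nabla G|$) transport functions of $\mathcal{V}$ into a space of functions on the cylinder $\Omega(t_0)\times(U\cap[0,T])$ whose spatial gradient lies in $L^p$ and which vanish on $\partial\Omega(t_0)$. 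Under this change of variable the identity to prove becomes, by the chain rule and the area formula,
\begin{equation}
\label{pp:ibp-frozen}
\int_{s_1}^{s_2}\!\!\int_{\Omega(t_0)}\!\big(\partial_s\hat u\,\hat v+\partial_s\hat v\,\hat u\big)|\nabla G|\,dy\,ds
+\int_{s_1}^{s_2}\!\!\int_{\Omega(t_0)}\!\hat u\,\hat v\,\partial_s|\nabla G|\,dy\,ds
=\Big[\int_{\Omega(t_0)}\!\hat u\,\hat v\,|\nabla G|\,dy\Big]_{s_1}^{s_2},
\end{equation}
for $s_1,s_2\in U\cap[0,T]$, and the cross terms involving $\nabla_x u\cdot\partial_s G$ are reabsorbed into the definitions of $\partial_s\hat u$ and $\partial_s\hat v$ and cancel when paired symmetrically.

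To prove (\ref{pp:ibp-frozen}) I would first approximate $u,v\in\mathcal{V}$ by sequences $u_n,v_n\in C_c^1(\widetilde\Omega)$; for such smooth compactly supported functions the transported $\hat u_n,\hat v_n$ are smooth and compactly supported in $\Omega(t_0)\times(U\cap[0,T])$, so (\ref{pp:ibp-frozen}) is just the usual Leibniz rule on a product of absolutely continuous factors (here one uses that $|\nabla G(y,\cdot)|$ is absolutely continuous with $\partial_s|\nabla G|\in L^1$). Passing to the limit $n\to\infty$ is standard once we observe that $u_n\to u$ in $\mathcal{V}$ (and similarly for $v_n$) together with the fact that $\partial_s\hat u_n\to\partial_s\hat u$ in $\mathcal{V}^*$ of the frozen cylinder; the pairings in (\ref{pp:ibp-frozen}) are exactly those between $\mathcal{V}$ and $\mathcal{V}^*$, and the extra zero-th order term with $\partial_s|\nabla G|$ is controlled in $L^\infty_tL^1_y$ thanks to Assumption \ref{for_uniqueness}. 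Finally, unwinding the change of variables $x=G(y,s)$ turns the right-hand side of (\ref{pp:ibp-frozen}) into $\int_{\Omega(s_2)}u(s_2)v(s_2)\,dx-\int_{\Omega(s_1)}u(s_1)v(s_1)\,dx$, which matches the desired boundary terms; taking one-sided limits $s_1\to t_1+$ and $s_2\to t_2-$ gives the traces $u(t_1+),v(t_1+)$ and $u(t_2-),v(t_2-)$ on the Hausdorff limit sections (\ref{cerodef}).

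The main obstacle I anticipate is twofold: first, making sure that the transported time derivative $\partial_s\hat u$ corresponds, via the duality pairings, exactly to $u_t$ together with an advection term that cancels against its counterpart in $\partial_s\hat v\cdot\hat u$ upon symmetrization (this is a careful but purely algebraic chain-rule check); second, passing to one-sided trace limits at $t_1+,t_2-$ when these are endpoints of the interval where the chart $G$ is defined, which is handled by refining the finite cover $\{U_i\}$ of $[t_1,t_2]$ so that the endpoints are interior to the first and last chart and then shrinking. All other steps reduce to standard approximation and dominated convergence arguments using the bounds built into Assumption \ref{for_uniqueness}.
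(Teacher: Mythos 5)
Your overall strategy --- pulling back to the frozen cylinder $\Omega(t_0)\times(U\cap[0,T])$ via the charts $G(\cdot,t)$, proving a weighted Leibniz rule there, and patching with a finite cover and telescoping --- is exactly the intended one: the paper does not prove Proposition \ref{propar} itself but refers to \cite[Proposition 2.6]{Paronetto}, and Assumption \ref{for_uniqueness} is taken verbatim from that reference precisely to enable this change of variables. However, there is a genuine error in your bookkeeping of the advection terms, and as written the computation does not return \eqref{ibp}. Writing $V(x,s):=\partial_sG(G^{-1}(x,s),s)$ for the velocity of the chart, one has
\[
\partial_s\hat u\,\hat v+\hat u\,\partial_s\hat v=\big(u_tv+uv_t\big)\circ G+\big(\nabla_x(uv)\cdot V\big)\circ G,
\]
and the advection contribution is \emph{symmetric} in $u$ and $v$: the two cross terms add, they do not cancel ``when paired symmetrically.'' What actually happens is that, after undoing the change of variables, $\int\!\!\int\nabla_x(uv)\cdot V\,dx\,ds=-\int\!\!\int uv\,\mathrm{div}\,V\,dx\,ds$ (using that $uv$ has zero trace on the lateral boundary), and this cancels against your second integral on the frozen cylinder, the one carrying $\partial_s|\nabla G|$, because of the Jacobi identity $\partial_s|\nabla G|=|\nabla G|\,(\mathrm{div}\,V)\circ G$. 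If you discard the advection terms by the claimed symmetry cancellation, the $\partial_s|\nabla G|$ term survives unpaired and you end up proving \eqref{ibp} with a spurious extra term $\int_{t_1}^{t_2}\int_{\Omega(s)}uv\,\mathrm{div}\,V\,dx\,ds$. The fix lives entirely inside your framework, but it requires the Jacobi formula plus one spatial integration by parts, not a symmetry argument.

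A second, smaller gap is the limit passage. Approximating $u,v\in\mathcal{V}$ by $u_n,v_n\in C_c^1(\widetilde\Omega)$ in the $\mathcal{V}$-norm gives no control on $(u_n)_t$ in $\mathcal{V}^*$, nor on the time traces (which for elements of $\mathcal{V}$ exist only because $u_t\in\mathcal{V}^*$, cf.\ Definition \ref{solucion_distribucional}); your assertion that $\partial_s\hat u_n\to\partial_s\hat u$ in the dual of the frozen cylinder is precisely what fails for a generic $\mathcal{V}$-approximating sequence. The standard remedy is to regularize in the time variable only (e.g.\ Steklov averages of $\hat u,\hat v$ on the frozen cylinder): this commutes with $\partial_s$, preserves the zero lateral boundary values, and converges both in $L^p_sW^{1,p}_0$ and, for the time derivatives, in the dual space, so the Leibniz rule and the trace terms pass to the limit together.
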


\medskip 

\noindent{\it Proof of Theorem \ref{teounique}.}
Let $\tilde u_1, \tilde u_2$ be two solutions of \eqref{model1}. 
Let $\eps>0$ and 
define
$$
g_\epsilon (x):= \left\{
\begin{array}{cc}
\mbox{sign}(x) \left(-\frac{5|x|^4}{16 \epsilon^4}-\frac{2|x|^3}{ \epsilon^3}-\frac{9|x|^2}{2 \epsilon^2}+\frac{4|x|}{\epsilon} \right) & |x|<	2\epsilon,
\\
\mbox{sign}(x) & |x|\ge 2\epsilon
\end{array}
\right.
\in C^2(\R),
$$
which is a regularization of the sign function that converges pointwise as $\epsilon \to 0$. 
Note also that we have $g_\epsilon(\tilde u_1 - \tilde u_2) \in L^p(0,T,W_0^{1,p}(Q_0))$. 
Besides, $\mbox{supp}\, g_\epsilon(\tilde u_1 - \tilde u_2)$ 
lies in the closure of $\widetilde \Omega$. Then, with a slight abuse of notation, $g_\epsilon(\tilde u_1 - \tilde u_2)=g_\epsilon( u_1 -  u_2)$.

We pick $\{\phi_n\}_n \in \mathcal{D}(Q_T)$ such that $\phi_n \rightarrow g_\epsilon( u_1- u_2)$ strongly in $L^p(0,T,W_0^{1,p}(Q_0))$ and $\mbox{supp}\ \phi_n \subset \widetilde\Omega$. Note that 
the pairing 
$$
\langle ( u_1- u_2)_t,\phi_n \rangle_{\mathcal{V}^*-\mathcal{V}}
$$
makes sense and is bounded independently of $n$. Then we substitute $\phi_n$ in \eqref{weakId1}. On one hand, when $n \to \infty$ we get
$$
 \int_{\widetilde\Omega} \phi_n ( u_1- u_2)_t \, dxdt \rightarrow  \int_{\widetilde\Omega} g_\epsilon( u_1- u_2) ( u_1- u_2)_t \, dxdt.
$$
On the other hand, integrating by parts and using \eqref{L1},
$$
 \int_{\widetilde\Omega} \phi_n ( u_1-u_2)_t \, dxdt  = -  \int_{\widetilde\Omega} \nabla \phi_n \left( A(t,x, u_1,\nabla  u_1) - A(t,x, u_2,\nabla  u_2)\right)\, dxdt
$$
$$
\rightarrow -  \int_{\widetilde\Omega} \nabla g_\epsilon( u_1 -  u_2) \left( A(t,x, u_1,\nabla u_1) - A(t,x, u_2,\nabla  u_2)\right)\, dxdt \quad \mbox{as}\, n \to \infty.
$$
Thus, we have shown that
$$
\langle ( u_1- u_2)_t,g_\epsilon( u_1 -  u_2) \rangle_{\mathcal{V}^*-\mathcal{V}} =-  \int_{\widetilde\Omega} g_\epsilon^{'}( u_1- u_2) \nabla( u_1- u_2) [A(t,x, u_1,\nabla u_1) - A(t,x, u_2,\nabla  u_2)] \, dxdt.
$$
Using the fact that
$$
[g_\epsilon( u_1- u_2)]_t = g_\epsilon'( u_1- u_2) \cdot ( u_1- u_2)_t\quad \mbox{in}\ \mathcal{D}'(\widetilde\Omega)
$$
and denoting
$$
p_\epsilon(x):= \left\{
\begin{array}{cc}
x g_\epsilon'(x) & x\in (-2\epsilon,2 \epsilon)
\\ \\
0 & |x|\ge 2\epsilon
\end{array}
\right.
\in C^1(\R)
$$
 we may argue as before to obtain that
$$
\langle [g_\epsilon( u_1- u_2)]_t, u_1- u_2\rangle_{\mathcal{V}^*-\mathcal{V}} = 
-  \int_{\widetilde\Omega}\nabla [p_\epsilon (u_1-u_2)]  [A(t,x, u_1,\nabla u_1) - A(t,x, u_2,\nabla  u_2)]\, dxdt.
$$
In such a way,
$$
 \langle ( u_1- u_2)_t,g_\epsilon( u_1 -  u_2) \rangle_{\mathcal{V}^*-\mathcal{V}}+ \langle [g_\epsilon( u_1- u_2)]_t, u_1- u_2\rangle_{\mathcal{V}^*-\mathcal{V}}
$$
$$
= - \int_{\widetilde\Omega}  \nabla( u_1- u_2) [A(t,x, u_1,\nabla  u_1) - A(t,x, u_1,\nabla  u_2)]\left\{ g_\epsilon'(u_1- u_2)+p_\epsilon'(u_1-u_2)\right\}\, dxdt
$$
$$
 - \int_{\widetilde\Omega}  \nabla( u_1- u_2) [A(t,x, u_1,\nabla  u_2) - A(t,x, u_2,\nabla  u_2)]\left\{ 2g_\epsilon'(u_1- u_2)+(u_1-u_2) g_\epsilon^{''}(u_1-u_2)\right\}\, dxdt
$$
The first term above is less or equal than zero due to \eqref{L3} and the fact that $g_\epsilon'+p_\epsilon'\ge 0$, hence we can neglect it. As regards the second term, we notice that there is some $C>0$ such that
$$
|g_\epsilon'(x)|\le C/\epsilon,\quad |x g_\epsilon^{''}(x)|\le C/\epsilon\quad \forall x\in -(2\epsilon,2\epsilon).
$$
Then we use \eqref{L4} to write 
$$
II\le \frac{2C}{\epsilon} \int_{\widetilde\Omega} \chi_{\{|u_1- u_2|<2 \epsilon\}} |\nabla( u_1- u_2)|  |\nabla  u_2|^{p-1} | u_1- u_2| \, dxdt
$$
$$
\le 4C \int_0^T \int_{| u_1 -  u_2|\le 2\epsilon} |\nabla( u_1- u_2)|  |\nabla  u_2|^{p-1} \, dxdt:=\theta(\epsilon), 
$$
which is uniformly bounded with respect to $\epsilon$. In fact this term vanishes in the limit $\epsilon \to 0$ given that $\nabla( u_1- u_2)=0$ almost everywhere on the set of points such that $u_1-u_2=0$. 
Then, thanks to \eqref{ibp} we obtain that
$$
\int_{\Omega(T-)} g_\epsilon( u_1 - u_2)(T-) (u_1 - u_2)(T-)\, dx - \int_{\Omega(0)} g_\epsilon( u_1 -  u_2)(0) ( u_1 -  u_2)(0)\, dx \le \theta(\epsilon) 
$$
and thus taking the limit $\eps \to 0$ we find
$$
\int_{\Omega(T-)} |u_1-u_2|(T-) \, dx \le \int_{\Omega(0)} |u_1-u_2|(0) \, dx
$$
for any $T>0$. This implies our uniqueness result.
\qed
\begin{remark}\rm
This proof can be considerably simplified if the operator $A$ does not depend  explicitly on $u$, as we can choose $g_\epsilon(x)=x$ in the previous computations and all the proof boils down to the monotonicity property \eqref{L3}. 
\end{remark}

\begin{remark}\rm
Let us note that the same uniqueness proof can be extended to the case in which there exists a finite number of times $t_0:=0<t_1<\ldots <t_{N-1} <t_N:=T$ such that $ ((t_i,t_{i+1})\times Q_0 )\cap \widetilde \Omega$ verifies Assumption \ref{for_uniqueness} for each $i=0,\ldots,N-1$. Namely, the former proof would show that any two solutions $u_1,u_2$ with the same initial datum agree on $ ((0,t_1)\times Q_0) \cap \widetilde \Omega$. Taking traces at $t_1-$ we find that $u_1=u_2$ a.e. on $\Omega(t_1-)$. Thus $u_1=u_2$ a.e. on $\Omega(t_1+)$ and we can repeat the former uniqueness proof to obtain that $u_1$ agrees with $u_2$ on $((t_1,t_2)\times Q_0) \cap \widetilde \Omega$ and hence on $(0,t_2)\times Q_0 \cap \widetilde \Omega$. We can continue in this way until we reach uniqueness in the whole of $\widetilde \Omega$. 
\label{finite_sing}
\end{remark}

\begin{remark}\rm
We observe that  
Assumption \ref{for_uniqueness} could be replaced by the more general requirement that the domain $\widetilde\Omega$ satisfies \eqref{ibp}. In fact, it suffices to have \eqref{ibp} with a "$\ge$" instead of "$=$", and only for functions $u,v\in \mathcal{V}$ such that $u\, v \ge 0$.
\end{remark}

\end{document}